\numberwithin{equation}{section}
\newcommand{\C}{C}
\renewcommand{\div}{\operatorname{div}}
\newcommand{\dual}[2]{\langle #1,#2\rangle}
\newcommand{\ff}{\ve f}
\newcommand{\G}{\mathcal G}
\newcommand{\HH}{\ve H}
\newcommand{\J}{\mathcal J}
\newcommand{\nn}{\ve\nu}
\newcommand{\norm}[1]{\left\|#1\right\|}
\newcommand{\pbar}{\overline p}
\newcommand{\pOmega}{\partial\Omega}
\newcommand{\pphi}{\ve\varphi}
\newcommand{\pstav}[1]{\mathcal P(#1)}
\newcommand{\pstavFix}[1]{\mathcal P^{#1}}
\newcommand{\R}{\mathbb R}
\newcommand{\scal}[2]{(#1,#2)}
\newcommand{\tr}{\operatorname{tr}}
\renewcommand{\tt}{\ve\tau}
\newcommand{\Ttau}{{\sigma}^\tau}
\newcommand{\Uad}{\mathcal U_{ad}}
\newcommand{\ubar}{\overline\uu}
\newcommand{\un}{u_\nu}
\newcommand{\uu}{\ve u}
\newcommand{\V}{\ve V}
\newcommand{\vbar}{\overline\vv}
\newcommand{\Vdiv}{\V_{\div}}
\newcommand{\ve}[1]{\boldsymbol{#1}}
\newcommand{\vv}{\ve v}
\newcommand{\W}{\ve W}
\newcommand{\weakly}{\rightharpoonup}
\newcommand{\wOmega}{\widehat\Omega}
\newcommand{\ww}{\ve w}
\newtheorem{theorem}{Theorem}
\newtheorem{lemma}{Lemma}
\newtheorem{corollary}{Corollary}
\newtheorem{remark}{Remark}
\begin{document}



\title{Stokes problem with a solution dependent slip bound:\\ Stability of~solutions with respect to~domains}


\author[1]{Jaroslav Haslinger}
\affil[1]{\small Department of Numerical Mathematics, Faculty of Mathematics and Physics, Charles University in~Prague, Sokolovsk{\'a}~83, 186 75 Praha 8, Czech Republic}
\author[2]{Jan Stebel\footnote{Corresponding author,~e-mail:~\textsf{jan.stebel@tul.cz}}}
\affil[2]{Institute of Novel Technologies and Applied Informatics, Faculty of Mechatronics, Informatics and Interdisciplinary Studies, Technical University Liberec, Studentsk{\'a} 1402/2, 461 17 Liberec 1, Czech Republic}

\date{}

\maketitle

{\bf Abstract:}
We study the Stokes problem in a bounded planar domain $\Omega$ with a~friction type boundary condition that switches between a slip and no-slip stage.
Unlike our previous work \cite{HaSt_given_slip}, in the present paper the threshold value may depend on the velocity field.
Besides the usual velocity-pressure formulation, we introduce an alternative formulation with three Lagrange multipliers which allows a more flexible treatment of the impermeability condition as well as optimum design problems with cost functions depending on the shear and/or normal stress.
Our main goal is to determine under which conditions concerning smoothness of $\Omega$, solutions to the Stokes system depend continuously on variations of $\Omega$.
Having this result at our disposal, we easily prove the existence of a solution to optimal shape design problems for a large class of cost functionals.

{\bf Key words:}
Stokes problem, friction boundary conditions, domain dependence of solutions.

{\bf MSC (2010):} 49Q10, 76D07

\maketitle                   







\section{Introduction}


This paper analyses one property of the Stokes system defined in $\Omega\subset\R^2$ with a slip type boundary condition, namely the continuous dependence of its solutions on the shape of $\Omega$.
This property plays the crucial role in the existence analysis of optimal shape design problems. 
The no-slip boundary condition, i.e. the vanishing velocity on the boundary, is  widely used in practice.
It characterizes the adhesion of the fluid on the solid wall.
This condition is acceptable for small velocities and on a macroscopic level. 
On the other hand, there are many situations (flow of the fluid on hydrophobic surfaces, polymer melts flow, problems with multiple interfaces, micro/nanofluidics etc.) where the slip of the fluid occurs.
To get a more realistic model, the slip has to be taken into account.  
For the physical justification of different types of slip conditions we refer to \cite{RR99} and \cite{HL03}.
The mathematical analysis of the Stokes and Navier-Stokes system with the slip and  leak boundary conditions has been done in \cite{fujita1994} and extended to non-stationary problems in \cite{fujita2002}.
The regularity  of solutions to the Stokes system with slip and leak boundary conditions has been established in \cite{saito}.
In \cite{BM13} the stick--slip condition is considered as an implicit constitutive equation on the boundary and the existence of weak solutions to Bingham and Navier-Stokes fluids is proven. 

Shape optimization involving fluid models with slip boundary conditions as the state problem is of a great practical importance.
Slip boundary conditions affect the velocity profile and hence  the velocity gradient of the fluid in the vicinity of the wall.
The velocity gradient is an important factor in  the transformation of the mechanical energy to heat, the process representing the energy loss. 
Shape optimization of the interior of hydraulic elements may reduce the velocity gradient resulting in energy savings.
In \cite{HaSt_given_slip}   a class of shape optimization problems for the Stokes system with the threshold boundary conditions involving a priori given slip bound has been studied. 
The existence result for the continuous setting of the problem and convergence analysis for appropriate discretizations of the continuous model have been established. 

Nevertheless it is known from experiments that the slip bound may depend on the solution itself, e.g. on values of the tangential component of the velocity.
The aim of this paper is to extend the existing stability results to this type of the slip boundary condition.
Besides the standard velocity-pressure formulation used in \cite{HaSt_given_slip} we present a new weak formulation adding another two Lagrange multipliers: one releasing the impermeability condition and the other regularizing the non-smooth slip functional.
This new formulation turns out to be useful in numerical solution of this problem.
Moreover, it enables us to approximate directly the normal and shear stress and to use these quantities as arguments of  appropriate  objective functionals to control the stress distribution along the slip part of the boundary.

The paper is organized as follows: in Section \ref{sec:formulation_vp} we present the velocity-pressure formulation of the Stokes system with a solution dependent slip bound.
Using fixed point arguments we prove that such problem has at least one solution for any slip bound represented by a continuous, positive and bounded from above function $g$.
If in addition, $g$ is Lipschitz continuous with modulus of Lipschitz continuity sufficiently small, then the solution is unique.
Section \ref{sec:formulation_4field} deals with a four-field formulation of the problem whose solution is represented  by the velocity $u$, pressure $p$, normal, tangential shear stress $\sigma^\nu$, and $\sigma^\tau$, respectively.
In Section \ref{sec:shape_stability} we prove that the graph of the respective generally multi-valued solution mappings considered as a function of the shape of the slip  part of the boundary, is closed in an appropriate topology.
On the basis of these results the existence of solutions to a class of optimal shape design problems will be proven in Section \ref{sec:shape_optimization}.


\section{The velocity-pressure formulation of the problem}
\label{sec:formulation_vp}

Unlike \cite{HaSt_given_slip}, where the slip bound was given, the present paper deals with a more general case, namely the slip bound will be a function of the tangential velocity.

Let $\Omega\subset\R^2$ be a bounded domain with the Lipschitz boundary $\pOmega$.
The slip boundary conditions are prescribed on an open, non-empty part $S$ of the boundary and the no-slip condition on $\Gamma=\pOmega\setminus\overline S$, $\Gamma\neq\emptyset$:
\begin{subequations}
\label{eq:stokes}
\begin{align}
-\Delta\uu + \nabla p &= \ff &&\mbox{ in }\Omega,\\
\div\uu &= 0 &&\mbox{ in }\Omega,\\
\uu &= \ve 0 &&\mbox{ on }\Gamma,\\
\label{eq:bc_imperm}
\un &= 0 &&\mbox{ on }S,\\
|\Ttau| & \le g(|u_\tau|) &&\mbox{ on }S,\\
u_\tau\neq 0 & \Rightarrow |\Ttau|=g(|u_\tau|) ~\&~ \exists\lambda\ge 0: u_\tau=-\lambda\Ttau &&\mbox{ on }S.
\end{align}
\end{subequations}
Here $\uu=(u_1,u_2)$ is the velocity field, $p$ is the pressure and $\ff$ is the external force.
Further, $\nn$, $\tt$ denote the unit outward normal, and tangential vector to $\pOmega$, respectively.
If $\ve a\in\R^2$ is a vector then $a_\nu:=\ve a\cdot\nn$, $a_\tau:=\ve a\cdot\tt$ is its normal, and the tangential component on $\pOmega$, respectively.
Finally, $\Ttau:=\left(\frac{\partial\uu}{\partial\nn}\right)_\tau$ stands for the shear stress and $g:\R_+\to\R_+$ is a given slip bound function.
By a classical solution of \eqref{eq:stokes} we mean any couple of sufficiently smooth functions $(\uu,p)$ satisfying the differential equations and the boundary conditions in \eqref{eq:stokes}.

To give the weak formulation of \eqref{eq:stokes} we shall need the following function sets:
\begin{align}
\label{eq:def_space_VV}
\V(\Omega) &= \{\vv\in(H^1(\Omega))^2|~\vv=\ve 0 \mbox{ on }\Gamma,~v_\nu=0\mbox{ on }S\},\\
\label{eq:def_space_Vdiv}
\Vdiv(\Omega) &= \{\vv\in \V(\Omega)|~\div\vv=0 \mbox{ a.e. in }\Omega\},\\
L^2_0(\Omega) &= \{q\in L^2(\Omega)|~ \int_\Omega q = 0\},\\
L^2_+(S) &= \{\varphi\in L^2(S)|~ \varphi\ge 0 \mbox{ a.e. on }S\},\\
H^{1/2}(S) &= \{\varphi\in L^2(S)|~ \exists v\in H^1(\Omega),~v=0 \mbox{ on }\Gamma:~v=\varphi \mbox{ on }S\},\\
H^{1/2}_+(S) &= \{\varphi\in H^{1/2}(S)|~\varphi\ge 0 \mbox{ a.e. on }S\}.
\end{align}

\begin{remark}
If $\vv\in\V(\Omega)$ and $S\in\C^{1,1}$ then it is readily seen that $v_{\tau|S}\in H^{1/2}(S)$.
\end{remark}

\bigskip

{\it From now on we shall suppose that $S\in\C^{1,1}$.}

\bigskip

The trace space $H^{1/2}(S)$ is equipped with the norm
\[ \norm{\varphi}_{1/2,S} = \inf_{\substack{\vv\in \V(\Omega)\\v_\tau=\varphi}}|\vv|_{1,\Omega} = |\ww(\varphi)|_{1,\Omega}, \]
where $\ww(\varphi)\in \V(\Omega)$ is the solution to
\[ \left.\begin{aligned}
\Delta\ww(\varphi) &= \ve 0 \mbox{ in }\Omega,\\
\ww(\varphi) &= \ve 0 \mbox{ on }\Gamma,\\
w_\nu(\varphi) &= 0 \mbox{ on }S,\\
w_\tau(\varphi) &= \varphi \mbox{ on }S.
\end{aligned}\right\} \]

Further we introduce the following forms:
\begin{multline}
\label{eq:def_ab}
a(\uu,\vv) = \int_\Omega\nabla\uu:\nabla\vv, \quad b(\vv,q) = \int_\Omega q\div\vv, \quad j(\varphi,v_\tau) = \int_S g(\varphi)|v_\tau|,\\
\quad \uu,\vv\in (H^1(\Omega))^2,~q\in L^2(\Omega),~ \varphi\in H^{1/2}_+(S).
\end{multline}
We shall assume that $g:\R_+\to\R_+$ is \emph{continuous} and there exist positive constants $g_{min}<g_{max}$ such that
\begin{equation}
\label{eq:asm_g}
g_{min}\le g(\cdot) \le g_{max} \mbox{ in }\R_+.
\end{equation}

The weak formulation of \eqref{eq:stokes} reads as follows:
\begin{equation}
\tag{$\mathcal P$}
\label{eq:stokes_weak}
\left.\begin{array}{ll}
\multicolumn{2}{l}{\mbox{\it Find $(\uu,p)\in \V(\Omega)\times L^2_0(\Omega)$ such that}}\\\\
\forall\vv\in \V(\Omega):& a(\uu,\vv-\uu)-b(\vv-\uu,p)\hspace{30mm}\\\\
\multicolumn{2}{r}{+j(|u_\tau|,v_\tau)-j(|u_\tau|,u_\tau)\ge (\ff,\vv-\uu)_{0,\Omega},}\\\\
\forall q\in L^2_0(\Omega):& b(\uu,q)=0.
\end{array}\right\}
\end{equation}

We will show that under the above mentioned assumptions on $g$, \eqref{eq:stokes_weak} has at least one solution for any $\ff\in(L^2(\Omega))^2$.
To this end we use the weak variant of Schauder's fixed point theorem \cite{hhnl88}.

\medskip

For a given function $\varphi\in H^{1/2}_+(S)$ we consider the auxiliary problem:
\begin{equation}
\tag{$\pstavFix{\varphi}$}
\label{eq:stokes_weak_aux}
\left.\begin{array}{ll}
\multicolumn{2}{l}{\mbox{\it Find $(\uu^\varphi,p^\varphi)\in \V(\Omega)\times L^2_0(\Omega)$ such that}}\\\\
\forall\vv\in \V(\Omega):& a(\uu^\varphi,\vv-\uu^\varphi)-b(\vv-\uu^\varphi,p^\varphi)\hspace{30mm}\\\\
\multicolumn{2}{r}{+j(\varphi,v_\tau)-j(\varphi,u^\varphi_\tau)\ge (\ff,\vv-\uu^\varphi)_{0,\Omega},}\\\\
\forall q\in L^2_0(\Omega):& b(\uu^\varphi,q)=0.
\end{array}\right\}
\end{equation}

We know that for every $\varphi\in H^{1/2}_+(S)$ there exists a unique solution $(\uu^\varphi,p^\varphi)$ of \eqref{eq:stokes_weak_aux} and it satisfies (see \cite{fujita1994}):
\begin{equation}
\label{eq:estimate_aux}
\norm{\nabla\uu^\varphi}_{0,\Omega} + \norm{p^\varphi}_{0,\Omega} \le c(\norm{\ff}_{0,\Omega}+\norm{g(\varphi)}_{\infty,S}) \le \overline c,
\end{equation}
where $\overline c:=\overline c(\ff,g_{max})$ is a positive constant.

\bigskip

Let us define the mapping $\Psi:H^{1/2}_+(S)\to H^{1/2}_+(S)$ by
\[ \Psi(\varphi) = |u^\varphi_\tau| \mbox{ on }S. \]
Then \eqref{eq:stokes_weak} is equivalent to the problem of finding a fixed point of $\Psi$ in $H^{1/2}_+(S)$.
\\

\begin{theorem}
\label{th:prop_Psi}
The mapping $\Psi$ has the following properties:
\begin{itemize}
\item[\it (i)] $\Psi(B)\subset B$, where $B=\{\varphi\in H^{1/2}_+(S)|~\norm{\varphi}_{1/2,S} \le \overline c\}$ and $\overline c$ is the constant from \eqref{eq:estimate_aux}.
\item[\it (ii)] $\Psi$ is weakly continuous in $H^{1/2}_+(S)$, i.e.
\[ \varphi_k\weakly\varphi \mbox{ in }H^{1/2}(S),~\varphi_k,\varphi\in H^{1/2}_+(S) \Rightarrow \Psi(\varphi_k)\weakly\Psi(\varphi) \mbox{ in }H^{1/2}(S). \]
\end{itemize}
\end{theorem}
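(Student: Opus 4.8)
The plan is to prove the two parts in order. For part (i), I would argue directly: given $\varphi \in B$, the solution $\uu^\varphi$ of the auxiliary problem \eqref{eq:stokes_weak_aux} satisfies the estimate \eqref{eq:estimate_aux}, so $\norm{\nabla\uu^\varphi}_{0,\Omega} \le \overline c$. Since $\Psi(\varphi) = |u^\varphi_\tau|$ and $\uu^\varphi \in \V(\Omega)$ with $u^\varphi_\tau$ as its tangential trace, the definition of the norm $\norm{\cdot}_{1/2,S}$ as an infimum over $\V(\Omega)$-extensions gives $\norm{\Psi(\varphi)}_{1/2,S} = \norm{|u^\varphi_\tau|}_{1/2,S} \le |\uu^\varphi|_{1,\Omega} \le \overline c$; one should note here that taking the absolute value does not increase the $H^{1/2}(S)$ seminorm (since the harmonic extension realizing the norm has Dirichlet energy that does not increase under truncation at zero in the appropriate sense), so $\Psi(\varphi) \in B$. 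Also $\Psi(\varphi) \ge 0$ a.e.\ on $S$ by construction, so $\Psi(\varphi) \in H^{1/2}_+(S)$, and nonnegativity of the slip bound values is automatic.

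For part (ii), the strategy is a standard compactness-plus-uniqueness argument. Let $\varphi_k \weakly \varphi$ in $H^{1/2}(S)$ with all elements in $H^{1/2}_+(S)$, and denote by $(\uu^{\varphi_k}, p^{\varphi_k})$ the corresponding solutions. By \eqref{eq:estimate_aux} the sequence $(\uu^{\varphi_k}, p^{\varphi_k})$ is bounded in $\V(\Omega) \times L^2_0(\Omega)$, so along a subsequence it converges weakly to some $(\uu^*, p^*)$; by compactness of the trace map $H^1(\Omega) \to L^2(S)$, we get $u^{\varphi_k}_\tau \to u^*_\tau$ strongly in $L^2(S)$ (and the weak $H^{1/2}(S)$ limit of $u^{\varphi_k}_\tau$ is $u^*_\tau$). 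The next step is to pass to the limit in the variational inequality \eqref{eq:stokes_weak_aux} written for $\varphi_k$: the bilinear terms $a$, $b$ pass to the limit by weak convergence (using $\liminf$ on the $a(\uu^{\varphi_k},\uu^{\varphi_k})$ term, or rewriting so the test function is fixed), and the crucial term $j(\varphi_k, v_\tau) = \int_S g(\varphi_k)|v_\tau|$ converges to $j(\varphi, v_\tau)$ because $\varphi_k \to \varphi$ in $L^2(S)$ (from compact embedding $H^{1/2}(S) \hookrightarrow L^2(S)$), hence $g(\varphi_k) \to g(\varphi)$ a.e.\ along a further subsequence, and dominated convergence applies using \eqref{eq:asm_g}. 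Similarly $j(\varphi_k, u^{\varphi_k}_\tau) \to j(\varphi, u^*_\tau)$. One concludes $(\uu^*, p^*)$ solves \eqref{eq:stokes_weak_aux} for $\varphi$; by the uniqueness of that solution, $\uu^* = \uu^\varphi$, so $\Psi(\varphi_k) = |u^{\varphi_k}_\tau| \weakly |u^\varphi_\tau| = \Psi(\varphi)$ in $H^{1/2}(S)$, and since the limit is independent of the subsequence, the whole sequence converges.

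The main obstacle I expect is the limit passage in the term $j(\varphi_k, u^{\varphi_k}_\tau) = \int_S g(\varphi_k)|u^{\varphi_k}_\tau|$ inside the variational inequality: both arguments vary with $k$, and one cannot simply use lower semicontinuity because the term enters with the "wrong" sign depending on how the inequality is arranged. The cleanest route is to exploit the strong $L^2(S)$ convergence of both $\varphi_k \to \varphi$ and $u^{\varphi_k}_\tau \to u^*_\tau$ (the latter from the compact trace embedding) together with the uniform bound $g(\varphi_k) \le g_{max}$, so that $g(\varphi_k)|u^{\varphi_k}_\tau| \to g(\varphi)|u^*_\tau|$ in $L^1(S)$; a secondary care point is justifying that taking absolute values commutes with weak $H^{1/2}$-convergence in the sense needed for the final conclusion $\Psi(\varphi_k) \weakly \Psi(\varphi)$, which follows once one knows $u^{\varphi_k}_\tau \weakly u^\varphi_\tau$ in $H^{1/2}(S)$ and $u^{\varphi_k}_\tau \to u^\varphi_\tau$ strongly in $L^2(S)$, identifying the weak limit of $|u^{\varphi_k}_\tau|$ as $|u^\varphi_\tau|$.
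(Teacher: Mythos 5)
Your proposal is correct and follows essentially the same route as the paper: part (i) via the estimate \eqref{eq:estimate_aux} together with the fact that taking the absolute value does not increase the $H^{1/2}(S)$ trace norm, and part (ii) via boundedness, weak subsequential limits, passage to the limit in the variational inequality (handling the friction term through a.e.\ convergence of $g(\varphi_k)$ after extracting a subsequence and strong $L^2(S)$ convergence of the tangential traces), uniqueness of the solution to \eqref{eq:stokes_weak_aux}, and identification of the weak $H^{1/2}(S)$ limit of $|u^{\varphi_k}_\tau|$ with $|u^\varphi_\tau|$. No essential differences from the paper's argument.
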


\begin{proof}
The property \textit{(i)} follows immediately from
\[ \norm{\,|u^\varphi_\tau|\,}_{1/2,S} \le \norm{u^\varphi_\tau}_{1/2,S} \le \norm{\nabla\uu^\varphi}_{0,\Omega} \le \overline c, \]
making use of \eqref{eq:estimate_aux}.

Let $(\uu^k,p^k)$ denote the solution to $(\pstavFix{\varphi_k})$, $\varphi_k\weakly\varphi$ in $H^{1/2}(S)$.
Since the sequence $\{(\uu^k,p^k)\}$ is bounded in $\V(\Omega)\times L^2_0(\Omega)$, there exists a subsequence (denoted by the index $k'$) such that
\[ \uu^{k'}\weakly \overline\uu \mbox{ in }(H^1(\Omega))^2,\quad p^{k'}\weakly \overline p \mbox{ in }L^2_0(\Omega),~k'\to\infty. \]
It is easy to show that $(\overline\uu,\overline p)$ is the solution of \eqref{eq:stokes_weak_aux}, i.e. $(\overline\uu,\overline p)=(\uu^\varphi,p^\varphi)$.
Indeed,
\begin{equation}
\label{eq:conv_bilin_forms}
\left.\begin{aligned}
\limsup_{k'\to\infty} a(\uu^{k'},\vv-\uu^{k'})&\le a(\overline\uu,\vv-\overline\uu),\\
(\ff,\vv-u^{k'})_{0,\Omega}&\to(\ff,\vv-\overline\uu)_{0,\Omega},\\
b(\vv-\uu^{k'},p^{k'})&\to b(\vv-\overline\uu,\overline p)\quad\forall\vv\in \V(\Omega).
\end{aligned}\right\}
\end{equation}
It remains to show that
\begin{equation}
\label{eq:conv_friction_term}
\int_S g(\varphi_{k'})(|v_\tau|-|u^{k'}_\tau|) \to \int_S g(\varphi)(|v_\tau|-|\overline u_\tau|),~k'\to\infty.
\end{equation}
To prove \eqref{eq:conv_friction_term}, we use that
\begin{equation}
\label{eq:strong_conv_varphi_g}
\varphi_{k'}\weakly\varphi \mbox{ in }H^{1/2}(S)\Rightarrow \varphi_{k''}\to\varphi \mbox{ a.e. on }S\Rightarrow g(\varphi_{k''})\to g(\varphi) \mbox{ a.e. on }S,
\end{equation}
where $\{\varphi_{k''}\}$ denotes a subsequence of $\{\varphi_{k'}\}$.
Moreover,
\begin{equation}
\label{eq:strong_conv_abs_u}
|u^{k'}_\tau|\to|\overline u_\tau| \mbox{ in }L^2(S).
\end{equation}
Clearly \eqref{eq:strong_conv_varphi_g} and \eqref{eq:strong_conv_abs_u} imply \eqref{eq:conv_friction_term}.
From \eqref{eq:conv_bilin_forms} and \eqref{eq:conv_friction_term} it follows that $(\overline\uu,\overline p)$ is a solution to \eqref{eq:stokes_weak_aux}. Since this solution is unique, then
\[ (\uu^k,p^k)\weakly(\overline\uu,\overline p) \mbox{ weakly in }(H^1(\Omega))^2\times L^2_0(\Omega), ~k\to\infty, \]
i.e. $(\overline\uu,\overline p)=(\uu^\varphi,p^\varphi)$.
Finally
\begin{multline*}
\uu^k\weakly\uu^\varphi \mbox{ in }(H^1(\Omega))^2 \Rightarrow |\uu^k|\weakly|\uu^\varphi| \mbox{ in }(H^1(\Omega))^2
\Rightarrow |u^k_\tau|\weakly|u^\varphi_\tau| \mbox{ in }H^{1/2}(S)\\
\Leftrightarrow \Psi(\varphi_k)\weakly\Psi(\varphi) \mbox{ in }H^{1/2}(S)
\end{multline*}
proving \textit{(ii)}.
\end{proof}

From the weak variant of Schauder's theorem and Theorem \ref{th:prop_Psi} it follows that there exists at least one fixed point of $\Psi$ in $H^{1/2}_+(S)$ and thus at least one solution to \eqref{eq:stokes_weak}.

\begin{remark}
Problem \eqref{eq:stokes_weak_aux} is well posed also for $\varphi\in L^2_+(S)$.
Similarly as in the previous theorem, one can show that $\Psi:L^2_+(S)\to L^2_+(S)$ is continuous:
\[ \varphi_k\to\varphi \mbox{ in }L^2(S),~\varphi_k,\varphi\in L^2_+(S)\Rightarrow\Psi(\varphi_k)\to\Psi(\varphi)\mbox{ in }L^2(S). \]
\end{remark}

Next we shall study under which conditions, problem \eqref{eq:stokes_weak} has a unique solution.

\begin{theorem}
\label{th:lip_Psi}
In addition to \eqref{eq:asm_g}, let $g:\R_+\to\R_+$ be Lipschitz continuous in $\R_+$:
\begin{equation}
\label{eq:lip_g}
\exists L>0: |g(x_1)-g(x_2)| \le L|x_1-x_2| ~\forall x_1,x_2\in\R_+.
\end{equation}
Then $\Psi:L^2_+(S)\to L^2_+(S)$ is Lipschitz continuous, as well:
\begin{equation}
\label{eq:lip_Psi}
\norm{\Psi(\varphi_1)-\Psi(\varphi_2)}_{0,S} \le c^2L\norm{\varphi_1-\varphi_2}_{0,S} ~\forall\varphi_1,\varphi_2\in L^2_+(S),
\end{equation}
where $L$ is from \eqref{eq:lip_g} and $c$ is the norm of the trace mapping $\tr:\V(\Omega)\to L^2(S)$, $\tr\vv=v_\tau$, assuming that $\V(\Omega)$ is equipped with the norm $|\cdot|_{1,\Omega}$.
\end{theorem}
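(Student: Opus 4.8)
The plan is to subtract the two variational inequalities in $(\pstavFix{\varphi_1})$ and $(\pstavFix{\varphi_2})$ in the standard way and to exploit the explicit form of $j$ from \eqref{eq:def_ab} together with the Lipschitz bound on $g$.

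Write $(\uu^i,p^i):=(\uu^{\varphi_i},p^{\varphi_i})$, $i=1,2$. In the inequality defining $(\uu^1,p^1)$ I would test with $\vv=\uu^2\in\V(\Omega)$, in the one defining $(\uu^2,p^2)$ with $\vv=\uu^1\in\V(\Omega)$, and add the two; the force terms $(\ff,\uu^2-\uu^1)_{0,\Omega}+(\ff,\uu^1-\uu^2)_{0,\Omega}$ cancel and one obtains
\[ a(\uu^1-\uu^2,\uu^1-\uu^2) \le b(\uu^1-\uu^2,p^1-p^2) + j(\varphi_1,u^2_\tau)-j(\varphi_1,u^1_\tau)+j(\varphi_2,u^1_\tau)-j(\varphi_2,u^2_\tau). \]
Since $\uu^1,\uu^2\in\Vdiv(\Omega)$, the difference $\uu^1-\uu^2$ is divergence free, so $b(\uu^1-\uu^2,p^1-p^2)=0$ and the pressures drop out entirely. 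Using $a(\vv,\vv)=|\vv|_{1,\Omega}^2$ and regrouping the four friction terms according to \eqref{eq:def_ab},
\[ |\uu^1-\uu^2|_{1,\Omega}^2 \le \int_S\bigl(g(\varphi_1)-g(\varphi_2)\bigr)\bigl(|u^2_\tau|-|u^1_\tau|\bigr). \]

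Next I would bound the right-hand side using the elementary inequality $\bigl||a|-|b|\bigr|\le|a-b|$, the Lipschitz property \eqref{eq:lip_g}, and Cauchy--Schwarz:
\[ \int_S\bigl(g(\varphi_1)-g(\varphi_2)\bigr)\bigl(|u^2_\tau|-|u^1_\tau|\bigr) \le L\int_S|\varphi_1-\varphi_2|\,|u^1_\tau-u^2_\tau| \le L\,\norm{\varphi_1-\varphi_2}_{0,S}\,\norm{u^1_\tau-u^2_\tau}_{0,S}. \]
The trace estimate $\norm{u^1_\tau-u^2_\tau}_{0,S}\le c\,|\uu^1-\uu^2|_{1,\Omega}$ then gives $|\uu^1-\uu^2|_{1,\Omega}^2\le cL\,\norm{\varphi_1-\varphi_2}_{0,S}\,|\uu^1-\uu^2|_{1,\Omega}$, hence $|\uu^1-\uu^2|_{1,\Omega}\le cL\,\norm{\varphi_1-\varphi_2}_{0,S}$. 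Applying $\bigl||a|-|b|\bigr|\le|a-b|$ once more and the trace estimate again,
\[ \norm{\Psi(\varphi_1)-\Psi(\varphi_2)}_{0,S} = \bigl\|\,|u^1_\tau|-|u^2_\tau|\,\bigr\|_{0,S} \le \norm{u^1_\tau-u^2_\tau}_{0,S} \le c\,|\uu^1-\uu^2|_{1,\Omega} \le c^2 L\,\norm{\varphi_1-\varphi_2}_{0,S}, \]
which is \eqref{eq:lip_Psi}.

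I do not expect a genuine obstacle. The two points requiring care are: checking that the pressure contribution vanishes, which rests solely on both velocities being truly solenoidal so that $\div(\uu^1-\uu^2)=0$; and telescoping the four friction terms $j(\varphi_1,u^2_\tau)-j(\varphi_1,u^1_\tau)+j(\varphi_2,u^1_\tau)-j(\varphi_2,u^2_\tau)$ into the single integral of the product $\bigl(g(\varphi_1)-g(\varphi_2)\bigr)\bigl(|u^2_\tau|-|u^1_\tau|\bigr)$, which is precisely what allows both constants $L$ and $c$ to be extracted cleanly. Everything else is the coercivity identity $a(\vv,\vv)=|\vv|_{1,\Omega}^2$, the inequality $\bigl||a|-|b|\bigr|\le|a-b|$, and Cauchy--Schwarz.
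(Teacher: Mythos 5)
Your proposal is correct and follows essentially the same route as the paper: subtract the two variational inequalities (the paper works directly in $\Vdiv(\Omega)$ so the pressure never appears, while you keep it and observe $b(\uu^1-\uu^2,p^1-p^2)=0$, which is the same point), arrive at $|\uu^1-\uu^2|_{1,\Omega}^2\le\int_S(g(\varphi_1)-g(\varphi_2))(|u^2_\tau|-|u^1_\tau|)$, and conclude via \eqref{eq:lip_g}, the reverse triangle inequality, Cauchy--Schwarz and the trace bound, exactly as in \eqref{eq:lip_est_for_Psi}.
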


\begin{proof}
Let $\varphi_1,\varphi_2\in L^2_+(S)$ and $(\uu^i,p^i)$ be solutions to $(\pstavFix{\varphi_i})$, $i=1,2$.
Then $\uu^i\in \Vdiv(\Omega)$ and
\[ a(\uu^i,\vv-\uu^i)+j(|u^i_\tau|,v_\tau) - j(|u^i_\tau|,u^i_\tau) \ge \scal{\ff}{\vv-\uu^i}_{0,\Omega}~\forall\vv\in \Vdiv(\Omega). \]
By a standard technique we obtain:
\begin{equation}
\label{eq:lip_est_for_Psi}
\begin{aligned}
|\uu^1-\uu^2|_{1,\Omega}^2 &= a(\uu^1-\uu^2,\uu^1-\uu^2) \le \int_S (g(\varphi_1)-g(\varphi_2))(|u^2_\tau|-|u^1_\tau|)\\
&\le_{\eqref{eq:lip_g}} L\norm{\varphi_1-\varphi_2}_{0,S}\norm{|u^2_\tau|-|u^1_\tau|}_{0,S}
\le L\norm{\varphi_1-\varphi_2}_{0,S}\norm{u^2_\tau-u^1_\tau}_{0,S}\\
&\le cL\norm{\varphi_1-\varphi_2}_{0,S}|\uu^1-\uu^2|_{1,\Omega}.
\end{aligned}
\end{equation}
From this and the fact that
\[ \norm{\Psi(\varphi_1)-\Psi(\varphi_2)}_{0,S} = \norm{|u^1_\tau|-|u^2_\tau|}_{0,S} \le c|\uu^1-\uu^2|_{1,\Omega} \]
we obtain \eqref{eq:lip_Psi}.
\end{proof}

\begin{corollary}
\label{th:corollary_iterations}
If $L<1/c^2$ then $\Psi$ is a contraction in $L^2_+(S)$.
Its unique fixed point belongs to the set $B_1=\{\varphi\in L^2_+(S)| ~\norm{\varphi}_{0,S}\le c\overline c\}$, since
\[ \norm{|u_\tau|}_{0,S} = \norm{u_\tau}_{0,S} \le c|\uu|_{1,\Omega} \le c\overline c, \]
as follows from \eqref{eq:estimate_aux}.
Consequently, the method of successive approximations
\[ \left.\begin{aligned} &\varphi_0\in L^2_+(S) \mbox{ given},\\ &\varphi_{k+1}:=\Psi(\varphi_k),~k=0,1,\ldots, \end{aligned}\right\} \]
converges in $L^2(S)$ to the unique fixed point $z$ of $\Psi$ in $L^2_+(S)$:
\begin{equation}
\label{eq:conv_succ_approx}
\varphi_k\to z \mbox{ in }L^2(S),\quad \Psi(z)=z \mbox{ on }S.
\end{equation}
In fact, $z\in H^{1/2}_+(S)$.
Indeed, let $\uu^k$, $\uu$ be the first component of the solution to $(\pstavFix{\varphi_k})$, and $(\pstavFix{z})$, respectively.
It is sufficient to show that $z=|u_\tau|\in H^{1/2}_+(S)$.
From \eqref{eq:lip_est_for_Psi} and \eqref{eq:conv_succ_approx} it follows:
\[ |\uu^k-\uu|_{1,\Omega} \le cL\norm{\varphi_k-z}_{0,S} \to 0,~k\to\infty, \]
so that
\[ \norm{\varphi_k-|u_\tau|}_{0,S} = \norm{|u^k_\tau|-|u_\tau|}_{0,S} \le \norm{u^k_\tau-u_\tau}_{0,S} \le c|\uu^k-\uu|_{1,\Omega} \to 0,~k\to\infty. \]
From this and \eqref{eq:conv_succ_approx} it follows that $z=|u_\tau|$ on $S$.
It is also readily seen that
\[ p^k\weakly p \mbox{ in }L^2(S) \]
and $(\uu,p)$ is the solution to $(\mathcal P)$.
\end{corollary}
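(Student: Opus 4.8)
\emph{Proof proposal.} The plan is to read off a contraction from the Lipschitz estimate \eqref{eq:lip_Psi}, apply the Banach fixed point theorem on the complete metric space $L^2_+(S)$, bootstrap the regularity of the resulting fixed point from $L^2(S)$ to $H^{1/2}(S)$, and finally identify the associated velocity-pressure pair as a solution of $(\mathcal P)$. The only genuinely delicate step will be the regularity bootstrap; the rest is routine once \eqref{eq:lip_Psi} and \eqref{eq:lip_est_for_Psi} are in hand.

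First I would note that $L^2_+(S)$ is closed in $L^2(S)$ (an $L^2$-limit of nonnegative functions is nonnegative a.e.), hence a complete metric space for the induced metric. By \eqref{eq:lip_Psi} and the hypothesis $L<1/c^2$ one has $\norm{\Psi(\varphi_1)-\Psi(\varphi_2)}_{0,S}\le\kappa\norm{\varphi_1-\varphi_2}_{0,S}$ for all $\varphi_1,\varphi_2\in L^2_+(S)$, with $\kappa:=c^2L<1$, so $\Psi$ is a contraction. Banach's theorem then produces a unique fixed point $z\in L^2_+(S)$, $\Psi(z)=z$, and for any starting value $\varphi_0\in L^2_+(S)$ the iterates $\varphi_{k+1}=\Psi(\varphi_k)$ converge to $z$ in $L^2(S)$. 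Writing $\uu$ for the velocity component of the solution of $(\pstavFix{z})$, the fixed-point identity $z=\Psi(z)=|u_\tau|$ together with the trace bound and \eqref{eq:estimate_aux} gives $\norm{z}_{0,S}=\norm{u_\tau}_{0,S}\le c|\uu|_{1,\Omega}\le c\overline c$, so $z\in B_1$.

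To improve the regularity of $z$, I would let $\uu^k$ denote the velocity component of the solution of $(\pstavFix{\varphi_k})$ and apply the a priori estimate \eqref{eq:lip_est_for_Psi} to the admissible pair $(\varphi_k,z)$, obtaining $|\uu^k-\uu|_{1,\Omega}\le cL\norm{\varphi_k-z}_{0,S}\to 0$. The trace theorem then yields $u^k_\tau\to u_\tau$ in $L^2(S)$, hence $\varphi_k=|u^k_\tau|\to|u_\tau|$ in $L^2(S)$; comparing with $\varphi_k\to z$ forces $z=|u_\tau|$ on $S$. Since $\uu\in\V(\Omega)$ and $S\in\C^{1,1}$, the trace $u_\tau|_S$ belongs to $H^{1/2}(S)$, and since $\varphi\mapsto|\varphi|$ maps $H^{1/2}(S)$ into itself we conclude $z=|u_\tau|\in H^{1/2}_+(S)$. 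This is the step I expect to be the main obstacle: one has to recover $H^1$-convergence of the velocities $\uu^k$ from the mere $L^2$-convergence of the multipliers $\varphi_k$ --- which is exactly where \eqref{eq:lip_est_for_Psi}, and hence the smallness of $L$, is indispensable --- and then use the boundedness of the absolute value on $H^{1/2}(S)$ (together with $S\in\C^{1,1}$) to place $z$ in $H^{1/2}_+(S)$ and not merely in $L^2_+(S)$.

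It remains to pass to the limit in $(\pstavFix{\varphi_k})$. The strong convergence $\uu^k\to\uu$ in $(H^1(\Omega))^2$ disposes of the forms $a$ and $b$ and of the right-hand side, while for the friction term $\int_S g(\varphi_k)(|v_\tau|-|u^k_\tau|)$ one uses $|u^k_\tau|\to|u_\tau|$ in $L^2(S)$ together with $g(\varphi_k)\to g(z)$ in $L^2(S)$, the latter following from $\varphi_k\to z$ in $L^2(S)$ by the standard subsequence argument and the bound \eqref{eq:asm_g}. Since $z=|u_\tau|$, the limiting variational inequality and incompressibility constraint are exactly those of $(\mathcal P)$, so $(\uu,p)$ solves $(\mathcal P)$, with $p$ the weak limit of $p^k$ in $L^2_0(\Omega)$.
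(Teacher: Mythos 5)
Your proposal is correct and takes essentially the same route as the paper: Banach's contraction principle in $L^2_+(S)$ based on \eqref{eq:lip_Psi}, the estimate \eqref{eq:lip_est_for_Psi} to upgrade $\varphi_k\to z$ in $L^2(S)$ to $|\uu^k-\uu|_{1,\Omega}\to 0$ and hence $z=|u_\tau|\in H^{1/2}_+(S)$ via the trace, and the identification of $(\uu,p)$ as a solution of \eqref{eq:stokes_weak}. The only cosmetic difference is your concluding limit passage in $(\pstavFix{\varphi_k})$, which is dispensable: since $z=|u_\tau|$, the pair $(\uu,p)$ solves $(\pstavFix{z})=(\pstavFix{|u_\tau|})$, which is precisely \eqref{eq:stokes_weak} by the fixed-point characterization.
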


\section{Four-field formulation of \eqref{eq:stokes_weak_aux} and \eqref{eq:stokes_weak}}
\label{sec:formulation_4field}

The pressure $p$ in the velocity-pressure formulation introduced in the previous section is the Lagrange multiplier associated with the incompressibility condition in $\Omega$.
This section presents another formulation involving two additional Lagrange multipliers $\sigma^\nu$, $\sigma^\tau$ defined on $S$ releasing the impermeability condition $u_\nu=0$ on $S$, and regularizing the non-differentiable functional $j$.
To this end we shall need the additional function spaces:
\begin{align}
W(\Omega) &= \{v\in H^1(\Omega)|~v=0\mbox{ on }\Gamma\},\\
\label{eq:def_space_WW}
\W(\Omega) &= W(\Omega)\times W(\Omega),\\
H^{-1/2}(S) &= (H^{1/2}(S))' \mbox{ (dual of }H^{1/2}(S)),\\
\HH^{1/2}(S) &= H^{1/2}(S)\times H^{1/2}(S),\\
\HH^{-1/2}(S) &= (\HH^{1/2}(S))'.
\end{align}
If $\ve\mu=(\mu_1,\mu_2)\in\HH^{-1/2}(S)$, $\pphi=(\varphi_1,\varphi_2)\in\HH^{1/2}(S)$ then
\[ \dual{\ve\mu}{\pphi}:= \dual{\mu_1}{\varphi_1} + \dual{\mu_2}{\varphi_2}. \]
Since $S\in\C^{1,1}$, the mapping
\[ \tr: \vv\mapsto (v_\nu,v_\tau), \mbox{ where }v_\nu=\vv_{|S}\cdot\nn,~v_\tau=\vv_{|S}\cdot\tt, \]
maps $\W(\Omega)$ onto $\HH^{1/2}(S)$.
If $\ve\mu=(\mu^\nu,\mu^\tau)\in\HH^{-1/2}(S)$ then
\[ \dual{\ve\mu}{\tr\vv}:=\dual{\mu^\nu}{v_\nu} + \dual{\mu^\tau}{v_\tau}. \]
Analogously to the previous section the space $\HH^{1/2}(S)$ is equipped with the norm 
\begin{equation}
\label{eq:norm_H12}
\norm{\pphi}_{1/2,S} = \inf_{\substack{\vv\in\W(\Omega)\\\tr\vv=\pphi}}|\vv|_{1,\Omega} = |\ww(\pphi)|_{1,\Omega},
\end{equation}
where $\ww(\pphi)$ in \eqref{eq:norm_H12} solves:
\begin{equation}
\label{eq:laplace_wphi}
\left.\begin{aligned}
\Delta\ww(\pphi)&=\ve 0 \mbox{ in }\Omega,\\
\ww(\pphi)&=\ve 0 \mbox{ on }\Gamma,\\
\tr\ww(\pphi)&=\pphi \mbox{ on }S.\qquad
\end{aligned}\right\}
\end{equation}
The standard dual norm in $\HH^{-1/2}(S)$ is given by
\[ \llbracket\ve\mu\rrbracket_{-1/2,S} = \sup_{\substack{\pphi\in\HH^{1/2}(S)\\\pphi\neq\ve 0}}\frac{\dual{\ve\mu}{\pphi}}{\norm{\pphi}_{1/2,S}} = \sup_{\substack{\vv\in\W(\Omega)\\\tr\vv\neq\ve 0}}\frac{\dual{\ve\mu}{\tr\vv}}{\norm{\tr\vv}_{1/2,S}}, \]
where $\norm{\ }_{1/2,S}$ is defined by \eqref{eq:norm_H12}.
One can introduce another norm on $\HH^{-1/2}(S)$, namely
\[ \norm{\ve\mu}_{-1/2,S} = \sup_{\substack{\vv\in\W(\Omega)\\\vv\neq\ve 0}}\frac{\dual{\ve\mu}{\tr\vv}}{|\vv|_{1,\Omega}}. \]
It is known \cite{girault-raviart} that 
\begin{equation}
\label{eq:equiv_dual_norms}
\llbracket\ve\mu\rrbracket_{-1/2,S} = \norm{\ve\mu}_{-1/2,S} \quad\forall\ve\mu\in\HH^{-1/2}(S).
\end{equation}

To regularize the functional $j(\varphi,\cdot)$ we introduce the bounded convex set $K(\varphi)$ defined by 
\[ K(\varphi)=\{\mu^\tau\in L^2(S)|~|\mu^\tau|\le g(\varphi) \mbox{ a.e. on }S\},\quad \varphi\in H^{1/2}_+(S). \]
It is readily seen that
\[ j(\varphi,v_\tau) = \int_S g(\varphi)|v_\tau| = \sup_{\mu^\tau\in K(\varphi)} \int_S \mu^\tau v_\tau. \]
Hence
\vspace{2mm}
\begin{equation}
\label{eq:ineq_j}
j(\varphi,v_\tau) \ge (\mu^\tau,v_\tau)_{0,S}\quad \forall\mu^\tau\in K(\varphi).
\end{equation}
\\
The \emph{four-field formulation} of \eqref{eq:stokes_weak_aux}, $\varphi\in H^{1/2}_+(S)$ reads as follows:
\begin{equation}
\tag{$\mathcal M^{\varphi}$}
\label{eq:stokes_weak_4field_psi}
\left.\begin{array}{ll}
\multicolumn{2}{l}{\mbox{\it Find $(\uu,p,\sigma^\nu,\sigma^\tau)\in \W(\Omega)\times L^2_0(\Omega)\times H^{-1/2}(S)\times K(\varphi)$}\mbox{ \it s.t.}}\\\\
\forall\vv\in \W(\Omega):& a(\uu,\vv)-b(\vv,p) - \dual{\sigma^\nu}{v_\nu} - \scal{\sigma^\tau}{v_\tau}_{0,S} = (\ff,\vv)_{0,\Omega},\\\\
\forall q\in L^2_0(\Omega):& b(\uu,q)=0,\\\\
\forall \mu^\nu\in H^{-1/2}(S):& \dual{\mu^\nu}{u_\nu} = 0,\\\\
\forall \mu^\tau\in K(\varphi):& \scal{\mu^\tau+\sigma^\tau}{u_\tau}_{0,S} \le 0.
\end{array}\right\}
\end{equation}
\\
Suppose that \eqref{eq:stokes_weak_4field_psi} has a solution.
In what follows we give its interpretation.
Clearly from $\eqref{eq:stokes_weak_4field_psi}_{2,3}$ we see that $\uu\in\Vdiv(\Omega)$, where $\Vdiv(\Omega)$ is defined by \eqref{eq:def_space_Vdiv}.
Using test functions $\vv\in\V(\Omega)$ in $\eqref{eq:stokes_weak_4field_psi}_1$ we get
\begin{equation}
\label{eq:momentum_sigma_tau}
a(\uu,\vv-\uu) - b(\vv-\uu,p) - (\sigma^\tau,v_\tau-u_\tau)_{0,S} = (\ff,\vv-\uu)_{0,\Omega} \quad \forall\vv\in\V(\Omega).
\end{equation}
From $\eqref{eq:stokes_weak_4field_psi}_4$ it follows that
\[ -(\sigma^\tau,u_\tau)_{0,S} = \sup_{\mu^\tau\in K(\varphi)} (\mu^\tau,u_\tau)_{0,S} = j(\varphi,u_\tau), \]
which together with \eqref{eq:ineq_j} yields
\[ -(\sigma^\tau,v_\tau-u_\tau)_{0,S} \le j(\varphi,v_\tau)-j(\varphi,u_\tau). \]
From this and \eqref{eq:momentum_sigma_tau} we obtain:
\[ a(\uu,\vv-\uu) - b(\vv-\uu,p) + j(\varphi,v_\tau)-j(\varphi,u_\tau) \ge (\ff,\vv-\uu)_{0,\Omega} \quad \forall\vv\in\V(\Omega), \]
i.e. the couple $(\uu,p)\in\V(\Omega)\times L^2_0(\Omega)$ solves \eqref{eq:stokes_weak_aux}.
The formal application of Green's formula to $\eqref{eq:stokes_weak_4field_psi}_1$ gives:
\[ \sigma^\nu = -p+\left(\frac{\partial\uu}{\partial\nn}\right)_\nu \quad\mbox{and}\quad \sigma^\tau = \left(\frac{\partial\uu}{\partial\nn}\right)_\tau \mbox{ on }S. \]
\begin{theorem}
\label{th:well_posed_4field_psi}
Problem \eqref{eq:stokes_weak_4field_psi} has a unique solution $(\uu,p,\sigma^\nu,\sigma^\tau)$ for any $\varphi\in H^{1/2}_+(S)$.
In addition, the couple $(\uu,p)$ solves \eqref{eq:stokes_weak_aux}.
\end{theorem}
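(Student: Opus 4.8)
The plan is to establish existence and uniqueness of the solution to \eqref{eq:stokes_weak_4field_psi} by reducing it to already-known facts about \eqref{eq:stokes_weak_aux}, and then recovering the multipliers $\sigma^\nu$, $\sigma^\tau$ via inf-sup (Babu\v{s}ka--Brezzi) arguments. The interpretation computation preceding the theorem already shows that \emph{any} solution of \eqref{eq:stokes_weak_4field_psi} yields a couple $(\uu,p)$ solving \eqref{eq:stokes_weak_aux}; since the latter has a unique solution, this pins down $\uu$ and $p$ uniquely. So the real work is (a) existence of the pair $(\sigma^\nu,\sigma^\tau)$ and (b) its uniqueness.

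First I would prove \textbf{uniqueness of the multipliers}. Suppose $(\uu,p,\sigma^\nu_1,\sigma^\tau_1)$ and $(\uu,p,\sigma^\nu_2,\sigma^\tau_2)$ both solve \eqref{eq:stokes_weak_4field_psi} (same $\uu$, $p$ by the argument above). Subtracting the first equations gives
\[ \dual{\sigma^\nu_1-\sigma^\nu_2}{v_\nu} + \scal{\sigma^\tau_1-\sigma^\tau_2}{v_\tau}_{0,S} = 0 \quad\forall\vv\in\W(\Omega). \]
Since $\tr$ maps $\W(\Omega)$ \emph{onto} $\HH^{1/2}(S)$, this means $(\sigma^\nu_1-\sigma^\nu_2,\sigma^\tau_1-\sigma^\tau_2)$ annihilates all of $\HH^{1/2}(S)$, hence vanishes in $\HH^{-1/2}(S)$; in particular $\sigma^\tau_1=\sigma^\tau_2$ in $L^2(S)$ and $\sigma^\nu_1=\sigma^\nu_2$ in $H^{-1/2}(S)$.

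Next, \textbf{existence of the multipliers}. Let $(\uu,p)$ be the known solution of \eqref{eq:stokes_weak_aux} with $\uu\in\Vdiv(\Omega)$. I would like to define a functional on $\W(\Omega)$ by $\vv\mapsto a(\uu,\vv)-b(\vv,p)-(\ff,\vv)_{0,\Omega}$; by \eqref{eq:stokes_weak_aux} with test functions in $\V(\Omega)$ modified by the friction inequality, the delicate point is that $\uu$ solves only a \emph{variational inequality}, not an equation, on $\V(\Omega)$. The way around this: use the correspondence with the velocity-pressure problem where $\sigma^\tau$ is identified as a selection from the subdifferential of $j(\varphi,\cdot)$. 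Concretely, by convex analysis the variational inequality \eqref{eq:stokes_weak_aux} is equivalent to the existence of $\sigma^\tau\in K(\varphi)$ with $-(\sigma^\tau,u_\tau)_{0,S}=j(\varphi,u_\tau)$ and
\[ a(\uu,\vv)-b(\vv,p)-(\sigma^\tau,v_\tau)_{0,S}=(\ff,\vv)_{0,\Omega}\quad\forall\vv\in\V(\Omega); \]
this is the standard Lagrange-multiplier characterization of the friction term (see e.g. the Tresca friction literature), and the condition $-(\sigma^\tau,u_\tau)_{0,S}=j(\varphi,u_\tau)$ together with \eqref{eq:ineq_j} is exactly $\eqref{eq:stokes_weak_4field_psi}_4$. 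Having $\sigma^\tau$, the linear functional
\[ \vv\mapsto a(\uu,\vv)-b(\vv,p)-(\sigma^\tau,v_\tau)_{0,S}-(\ff,\vv)_{0,\Omega} \]
vanishes on $\V(\Omega)=\{\vv\in\W(\Omega)\mid v_\nu=0\text{ on }S\}$, i.e. on the kernel of the map $\vv\mapsto v_\nu\in H^{1/2}(S)$. Because $\tr$ is surjective onto $\HH^{1/2}(S)$, the partial map $\vv\mapsto v_\nu$ is surjective onto $H^{1/2}(S)$ and satisfies the corresponding inf-sup condition; hence by the closed-range theorem there exists $\sigma^\nu\in H^{-1/2}(S)$ with $\dual{\sigma^\nu}{v_\nu}$ equal to that functional for all $\vv\in\W(\Omega)$, which is precisely $\eqref{eq:stokes_weak_4field_psi}_1$. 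Equations $\eqref{eq:stokes_weak_4field_psi}_{2,3}$ hold because $\uu\in\Vdiv(\Omega)$ and $u_\nu=0$ on $S$.

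The main obstacle I anticipate is the passage from the variational \emph{inequality} \eqref{eq:stokes_weak_aux} to the variational \emph{equality} with the multiplier $\sigma^\tau\in K(\varphi)$, i.e. verifying that the subdifferential of $\varphi\mapsto j(\varphi,\cdot)$ at $u_\tau$ is represented by an $L^2(S)$-function bounded by $g(\varphi)$. This is classical for Tresca-type friction because $g(\varphi)\in L^\infty(S)$ (thanks to \eqref{eq:asm_g}), so the subdifferential of $v_\tau\mapsto\int_S g(\varphi)|v_\tau|$ consists of functions $\mu^\tau$ with $|\mu^\tau|\le g(\varphi)$ a.e. and $\mu^\tau=-g(\varphi)\operatorname{sign}(u_\tau)$ where $u_\tau\neq0$; the identity $-(\sigma^\tau,u_\tau)_{0,S}=j(\varphi,u_\tau)$ then follows. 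Everything else (surjectivity of $\tr$, the inf-sup for $v_\nu$, the equivalence \eqref{eq:equiv_dual_norms}) has been set up in the preceding paragraphs of the paper and is routine to invoke.
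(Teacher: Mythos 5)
Your argument is correct, but it takes a genuinely different route from the paper. The paper treats \eqref{eq:stokes_weak_4field_psi} directly as a mixed problem: it verifies the LBB (inf--sup) condition for the combined constraint form $c(\vv,q,\ve\mu):=-b(\vv,q)-\dual{\ve\mu}{\tr\vv}$ over $L^2_0(\Omega)\times\HH^{-1/2}(S)$, by combining the classical divergence inf--sup on $(H^1_0(\Omega))^2$ with the dual-norm identity \eqref{eq:equiv_dual_norms} via Theorem 3.1 of Howell--Walkington, and then appeals to the general theory of mixed formulations of variational inequalities; the identification of $(\uu,p)$ as the solution of \eqref{eq:stokes_weak_aux} is taken from the interpretation computation preceding the theorem. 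You instead start from the already established unique solvability of \eqref{eq:stokes_weak_aux}, recover $\sigma^\tau\in K(\varphi)$ from the subdifferential characterization of $v_\tau\mapsto\int_S g(\varphi)|v_\tau|$ (legitimate: $g(\varphi)\in L^\infty(S)$ by \eqref{eq:asm_g}, so the chain rule $\partial(\psi\circ\tr_\tau)(\uu)=\tr_\tau^*\partial\psi(u_\tau)$ applies, yielding $|\sigma^\tau|\le g(\varphi)$ a.e. and $-(\sigma^\tau,u_\tau)_{0,S}=j(\varphi,u_\tau)$, hence $\eqref{eq:stokes_weak_4field_psi}_4$), and then obtain $\sigma^\nu$ by factoring the residual functional, which vanishes on $\V(\Omega)=\ker(\vv\mapsto v_{\nu|S})$ in $\W(\Omega)$, through the surjective normal trace (closed-range argument); uniqueness of the multipliers follows from surjectivity of $\tr:\W(\Omega)\to\HH^{1/2}(S)$, and uniqueness of $(\uu,p)$ from that of \eqref{eq:stokes_weak_aux}. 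What each approach buys: the paper's uniform inf--sup for $c$ is independent of $\varphi$ and is exactly the estimate one wants later for stability of discretizations and for bounding the multipliers, whereas your construction is more elementary and self-contained (no external saddle-point theorem for inequalities), and it delivers the second assertion of the theorem --- that $(\uu,p)$ solves \eqref{eq:stokes_weak_aux} --- automatically rather than as a separate interpretation step.
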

\begin{proof}
To prove the existence and uniqueness of a solution to \eqref{eq:stokes_weak_4field_psi} it is sufficient to show that the bilinear form $c(\vv,q,\ve\mu):=-b(\vv,q)-\dual{\ve\mu}{\tr\vv}$ satisfies the LBB-condition.
It is well-known (see e.g. \cite{girault-raviart}) that
\[ \exists\gamma>0:~ \sup_{\substack{\vv\in(H^1_0(\Omega))^2\\\vv\neq\ve 0}} \frac{b(\vv,q)}{|\vv|_{1,\Omega}} \ge \gamma\norm{q}_{0,\Omega} ~\forall q\in L^2_0(\Omega) \]
and from \eqref{eq:equiv_dual_norms} we have
\[ \sup_{\substack{\vv\in\W(\Omega)\\\vv\neq\ve 0}} \frac{\dual{\ve\mu}{\tr\vv}}{|\vv|_{1,\Omega}} = \llbracket\ve\mu\rrbracket_{-1/2,S} ~\forall\ve\mu\in\HH^{-1/2}(S). \]
Then there exists a constant $\tilde\gamma>0$ such that
\[ \sup_{\substack{\vv\in\W(\Omega)\\\vv\neq\ve 0}} \frac{c(\vv,q,\ve\mu)}{|\vv|_{1,\Omega}} \ge \tilde\gamma(\norm{q}_{0,\Omega} + \llbracket\ve\mu\rrbracket_{-1/2,S}) ~\forall(q,\ve\mu)\in L^2_0(\Omega)\times\HH^{-1/2}(S) \]
as follows from Theorem 3.1 in \cite{howell-walkington}.
\end{proof}

\bigskip

Any quadruplet $(\uu,p,\sigma^\nu,\sigma^\tau)$ is said to be a \emph{solution of the problem with the solution dependent coefficient of friction} if it solves \eqref{eq:stokes_weak_4field_psi} with $\varphi=|u_\tau|$ on $S$:
\begin{equation}
\tag{$\mathcal M$}
\label{eq:stokes_weak_4field}
\left.\begin{array}{ll}
\multicolumn{2}{l}{(\uu,p,\sigma^\nu,\sigma^\tau)\in \W(\Omega)\times L^2_0(\Omega)\times H^{-1/2}(S)\times K(|u_\tau|)~\mbox{\it s.t.}}\\\\
\forall\vv\in \W(\Omega):& a(\uu,\vv)-b(\vv,p) - \dual{\sigma^\nu}{v_\nu} - \scal{\sigma^\tau}{v_\tau}_{0,S} = (\ff,\vv)_{0,\Omega},\\\\
\forall q\in L^2_0(\Omega):& b(\uu,q)=0,\\\\
\forall \mu^\nu\in H^{-1/2}(S):& \dual{\mu^\nu}{u_\nu} = 0,\\\\
\forall \mu^\tau\in K(|u_\tau|):& \scal{\mu^\tau+\sigma^\tau}{u_\tau}_{0,S} \le 0.
\end{array}\right\}
\end{equation}
On the basis of the results of Section \ref{sec:formulation_vp} and Theorem \ref{th:well_posed_4field_psi} we arrive at the following theorem.
\begin{theorem}
Problem \eqref{eq:stokes_weak_4field} has a solution.
In addition, if $g$ satisfies \eqref{eq:lip_g} with $L$ as in Corollary \ref{th:corollary_iterations}, then the solution is unique.
\end{theorem}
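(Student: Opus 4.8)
The plan is to derive both assertions by reducing the four-field problem \eqref{eq:stokes_weak_4field} to the velocity--pressure problem \eqref{eq:stokes_weak}, using the auxiliary problems $(\pstavFix\varphi)$ and \eqref{eq:stokes_weak_4field_psi} as a bridge and exploiting the uniqueness statements already available for them.

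\medskip

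\emph{Existence.} First I would invoke Section \ref{sec:formulation_vp}: by the weak variant of Schauder's fixed point theorem together with Theorem \ref{th:prop_Psi}, problem \eqref{eq:stokes_weak} admits a solution $(\uu,p)\in\V(\Omega)\times L^2_0(\Omega)$. I would then set $\varphi:=|u_\tau|$ on $S$; since $\uu\in\V(\Omega)$ and $S\in\C^{1,1}$ one has $\varphi\in H^{1/2}_+(S)$, and by the definition of \eqref{eq:stokes_weak} the couple $(\uu,p)$ solves the auxiliary problem $(\pstavFix\varphi)$. Next I would apply Theorem \ref{th:well_posed_4field_psi} with this $\varphi$: problem \eqref{eq:stokes_weak_4field_psi} has a unique solution $(\hat\uu,\hat p,\sigma^\nu,\sigma^\tau)$, and its velocity--pressure part $(\hat\uu,\hat p)$ solves $(\pstavFix\varphi)$ as well. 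Since $(\pstavFix\varphi)$ has a unique solution, $(\hat\uu,\hat p)=(\uu,p)$; in particular $|\hat u_\tau|=|u_\tau|=\varphi$, whence $K(\varphi)=K(|\hat u_\tau|)$ and the quadruplet $(\hat\uu,\hat p,\sigma^\nu,\sigma^\tau)$ fulfils all the relations of \eqref{eq:stokes_weak_4field}.

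\medskip

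\emph{Uniqueness.} Assume in addition that $g$ obeys \eqref{eq:lip_g} with $L$ as in Corollary \ref{th:corollary_iterations}, and let $(\uu_i,p_i,\sigma^\nu_i,\sigma^\tau_i)$, $i=1,2$, be two solutions of \eqref{eq:stokes_weak_4field}. Then each $(\uu_i,p_i,\sigma^\nu_i,\sigma^\tau_i)$ solves $(\mathcal M^{\varphi_i})$ with $\varphi_i:=|u^i_\tau|$, and, as shown in the interpretation of \eqref{eq:stokes_weak_4field_psi} preceding Theorem \ref{th:well_posed_4field_psi}, the couple $(\uu_i,p_i)$ then solves $(\pstavFix{\varphi_i})$, which is precisely problem \eqref{eq:stokes_weak}. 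By Corollary \ref{th:corollary_iterations} the mapping $\Psi$ has a unique fixed point in $L^2_+(S)$, so, $(\pstavFix\varphi)$ being uniquely solvable, \eqref{eq:stokes_weak} has a unique solution; hence $(\uu_1,p_1)=(\uu_2,p_2)=:(\uu,p)$ and therefore $\varphi_1=\varphi_2=:\varphi=|u_\tau|$. Consequently both quadruplets $(\uu,p,\sigma^\nu_i,\sigma^\tau_i)$ solve the same problem $(\mathcal M^{\varphi})$, whose solution is unique by Theorem \ref{th:well_posed_4field_psi}; thus $\sigma^\nu_1=\sigma^\nu_2$ and $\sigma^\tau_1=\sigma^\tau_2$.

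\medskip

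I do not anticipate a genuine analytic obstacle here: the argument is essentially a transfer of information between the two formulations, and the only delicate point is keeping the self-consistency relation $\varphi=|u_\tau|$ intact when passing from \eqref{eq:stokes_weak} to \eqref{eq:stokes_weak_4field} and back. This is exactly where the uniqueness of the solutions to $(\pstavFix\varphi)$ and to \eqref{eq:stokes_weak_4field_psi} is indispensable; the remaining steps are routine bookkeeping.
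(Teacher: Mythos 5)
Your proposal is correct and follows essentially the same route as the paper, which states (without detailed proof) that the theorem follows from the results of Section \ref{sec:formulation_vp} (existence via Schauder, uniqueness via the contraction property of $\Psi$ for $L<1/c^2$) combined with Theorem \ref{th:well_posed_4field_psi}; your write-up simply fills in the bookkeeping the authors left implicit. The key identifications you make — that a solution of \eqref{eq:stokes_weak} yields, via the unique solvability of $(\pstavFix\varphi)$ and of \eqref{eq:stokes_weak_4field_psi} with $\varphi=|u_\tau|$, a solution of \eqref{eq:stokes_weak_4field}, and conversely that the velocity–pressure part of any solution of \eqref{eq:stokes_weak_4field} solves \eqref{eq:stokes_weak}, so uniqueness of the fixed point of $\Psi$ transfers to the multipliers — are exactly the intended argument.
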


\section{Stability of solutions with respect to boundary variations}
\label{sec:shape_stability}

The aim of this section is to show that solutions to \eqref{eq:stokes_weak} and \eqref{eq:stokes_weak_4field} depend continuously on the shape of $\Omega$.
We shall suppose that only the part $S$ of $\partial\Omega$ where the slip conditions are prescribed, is subject to variations.
In addition, for the sake of simplicity of our presentation we shall assume that $S$ is represented by the graph of a function $\alpha$ which belongs to an appropriate class $\Uad$.
Here and in what follows $\Uad$ will be defined by
\begin{equation}
\label{eq:defUad}
\Uad = \{\alpha\in\C^{1,1}([0,1])|~0<\alpha_{min}\le\alpha\le\alpha_{max} \mbox{ in }[0,1],
|\alpha^{(j)}|\le C_j,~j=1,2 \mbox{ a.e. in }(0,1)\},
\end{equation}
where  $\alpha_{min}$, $\alpha_{max}$ and $C_j>0$, $j=1,2$ are given.
With any $\alpha\in\Uad$ we associate the domain
\[ \Omega(\alpha) = \{(x_1,x_2)\in\R^2|~ x_1\in(0,1),~x_2\in(\alpha(x_1),\omega)\}, \]
where $\omega>0$ is a constant which does not depend on $\alpha\in\Uad$.
The family of admissible domains consists of all $\Omega(\alpha)$ with $\alpha\in\Uad$.
We shall also assume that $\ff\in(L^2(\R^2))^2$.

\subsection{Stability of \eqref{eq:stokes_weak}}
\label{sec:stability_P}

Let $\alpha\in\Uad$ be given and denote by $(\uu(\alpha),p(\alpha))\in \V(\Omega(\alpha))\times L^2_0(\Omega(\alpha))$ a (not necessarily unique) solution to $(\pstav\alpha)$ defined in $\Omega(\alpha)$:
\begin{equation}
\label{eq:stav_alpha}
\tag{$\pstav\alpha$}
\left.
\begin{aligned}
&\forall\vv\in \V(\Omega(\alpha)):~&&a_\alpha(\uu(\alpha),\vv-\uu(\alpha))-b_\alpha(\vv-\uu(\alpha)),p(\alpha))\\
&&&\qquad  + j_\alpha(|u_\tau(\alpha)|,v_\tau) - j_\alpha(|u_\tau(\alpha)|,u_\tau(\alpha))\\
&&&\qquad \ge \scal{\ff}{\vv-\uu(\alpha)}_{0,\Omega(\alpha)},\\
&\forall q\in L^2_0(\Omega(\alpha)):~&&b_\alpha(\uu(\alpha),q)=0,
\end{aligned}
\right\}
\end{equation}
where $a_\alpha$, $b_\alpha$ and $j_\alpha$ are defined by \eqref{eq:def_ab} on $\Omega:=\Omega(\alpha)$, $S:=S(\alpha)$.

\begin{remark}
Let us note that the condition on $L$ ensuring the uniqueness of the solution to $(\pstav{\alpha})$ can be chosen to be independent of $\alpha\in\Uad$.
Indeed, the constant $c$ in \eqref{eq:lip_Psi} can be bounded from above uniformly with respect to $\alpha\in\Uad$ (see Lemma 2.19 in \cite{haslinger-makinen}).
\end{remark}

Let $\wOmega\supset\Omega(\alpha)$ $\forall\alpha\in\Uad$ be a hold-all domain and $\pi_\alpha\in\mathcal L(\V(\Omega(\alpha)),(H^1_0(\wOmega))^2)$ an extension mapping from $\Omega(\alpha)$ to $\wOmega$.
Since all $\Omega(\alpha)$, $\alpha\in\Uad$ satisfy the uniform cone property, there exists $\pi_\alpha$ whose norm can be estimated independently of $\alpha\in\Uad$.
Finally, the upper index ``${}^0$'' stands for the zero extension of functions from $\Omega(\alpha)$ to $\wOmega$.

\begin{theorem}
There exists a constant $c:=c(\ff,g_{max})>0$ independent of $\alpha\in\Uad$ such that
\begin{equation}
\label{eq:est_sol_alpha}
\norm{\pi_\alpha\uu(\alpha)}_{1,\wOmega} + \norm{p^0(\alpha)}_{0,\wOmega} \le c
\end{equation}
holds for any solution $(\uu(\alpha),p(\alpha))$ to $(\pstav\alpha)$.
\end{theorem}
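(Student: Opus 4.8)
The plan is to prove \eqref{eq:est_sol_alpha} in two stages — first a uniform bound on the velocity, then on the pressure — taking care that every geometric constant that appears is controlled independently of $\alpha\in\Uad$. Write $\uu=\uu(\alpha)$, $p=p(\alpha)$ and recall that $\uu\in\Vdiv(\Omega(\alpha))$, so $b_\alpha(\uu,q)=0$ for all $q\in L^2_0(\Omega(\alpha))$. For the velocity I would test the variational inequality in $(\pstav\alpha)$ with $\vv=\ve 0\in\V(\Omega(\alpha))$: since $b_\alpha(\ve 0-\uu,p)=-b_\alpha(\uu,p)=0$ and $j_\alpha(|u_\tau|,\ve 0)=0$, this gives $a_\alpha(\uu,\uu)+j_\alpha(|u_\tau|,u_\tau)\le\scal{\ff}{\uu}_{0,\Omega(\alpha)}$, and because $g\ge 0$ the term $j_\alpha(|u_\tau|,u_\tau)=\int_{S(\alpha)}g(|u_\tau|)|u_\tau|$ is nonnegative, whence $|\uu|_{1,\Omega(\alpha)}^2\le\norm{\ff}_{0,\Omega(\alpha)}\norm{\uu}_{0,\Omega(\alpha)}$. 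The key point is a Friedrichs inequality whose constant does not depend on $\alpha$; this is available here thanks to the special geometry, since every $\uu\in\V(\Omega(\alpha))$ vanishes on the fixed top segment $(0,1)\times\{\omega\}\subset\Gamma$ and, for a.e. $x_1\in(0,1)$, the whole vertical fibre $\{x_1\}\times(\alpha(x_1),\omega)$ lies in $\Omega(\alpha)$, so that $u_i(x_1,x_2)=-\int_{x_2}^{\omega}\partial_2 u_i(x_1,s)\,ds$ and Cauchy--Schwarz together with $\alpha\ge\alpha_{min}$ yield $\norm{\uu}_{0,\Omega(\alpha)}\le(\omega-\alpha_{min})|\uu|_{1,\Omega(\alpha)}$.

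Combining the last two estimates, and using $\norm{\ff}_{0,\Omega(\alpha)}\le\norm{\ff}_{0,\R^2}$ (here $\ff\in(L^2(\R^2))^2$), gives $|\uu|_{1,\Omega(\alpha)}\le(\omega-\alpha_{min})\norm{\ff}_{0,\R^2}$ and hence $\norm{\uu}_{1,\Omega(\alpha)}\le c_1$ with $c_1$ independent of $\alpha$; then $\norm{\pi_\alpha\uu}_{1,\wOmega}\le\norm{\pi_\alpha}\norm{\uu}_{1,\Omega(\alpha)}\le c_1\sup_{\alpha\in\Uad}\norm{\pi_\alpha}$ by the uniform boundedness of the extension operators recalled before the theorem. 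For the pressure I would first recover the equation satisfied by $(\uu,p)$ on $(H^1_0(\Omega(\alpha)))^2$: testing the variational inequality with $\vv=\uu\pm\ww$, $\ww\in(H^1_0(\Omega(\alpha)))^2\subset\V(\Omega(\alpha))$, the $j_\alpha(|u_\tau|,\cdot)$ terms cancel because $w_\tau=0$ on $S(\alpha)$, giving $a_\alpha(\uu,\ww)-b_\alpha(\ww,p)=\scal{\ff}{\ww}_{0,\Omega(\alpha)}$ for all such $\ww$. Consequently $|b_\alpha(\ww,p)|\le\bigl(|\uu|_{1,\Omega(\alpha)}+(\omega-\alpha_{min})\norm{\ff}_{0,\R^2}\bigr)|\ww|_{1,\Omega(\alpha)}$, using the same fibre Friedrichs inequality for $\ww$. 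Dividing by $|\ww|_{1,\Omega(\alpha)}$, taking the supremum and invoking the LBB inequality for $b_\alpha$ with an inf-sup constant $\gamma_0>0$ independent of $\alpha\in\Uad$, I obtain $\norm{p}_{0,\Omega(\alpha)}\le\gamma_0^{-1}\bigl(c_1+(\omega-\alpha_{min})\norm{\ff}_{0,\R^2}\bigr)=:c_2$, and $\norm{p^0(\alpha)}_{0,\wOmega}=\norm{p}_{0,\Omega(\alpha)}$ since the zero extension preserves the $L^2$-norm. Adding the two bounds yields \eqref{eq:est_sol_alpha} with $c=c_1\sup_{\alpha\in\Uad}\norm{\pi_\alpha}+c_2$.

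The only genuinely non-trivial ingredients are the two uniform-in-$\alpha$ facts — boundedness of the extension operators $\pi_\alpha$ and the uniform LBB constant $\gamma_0$ for the divergence form $b_\alpha$ on $\Omega(\alpha)$ — and this is exactly where the $\C^{1,1}$-regularity constraints built into the definition \eqref{eq:defUad} of $\Uad$ are used: both are standard consequences of the uniform cone (Lipschitz) property of the family $\{\Omega(\alpha)\}_{\alpha\in\Uad}$, and I would simply quote them from \cite{girault-raviart,haslinger-makinen}. Everything else is the elementary energy/inf-sup bookkeeping above; the one slightly delicate point worth isolating is to phrase the Friedrichs inequality through vertical fibres, so that its constant is manifestly $\alpha$-independent rather than concealed inside a Poincaré inequality on the varying domain $\Omega(\alpha)$.
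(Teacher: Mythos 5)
Your proof is correct, but it follows a genuinely different route from the paper: the paper disposes of \eqref{eq:est_sol_alpha} in two lines by observing that, thanks to $g\le g_{max}$, the a priori bounds (3.4) and (3.5) established in \cite{HaSt_given_slip} for the problem with a \emph{given} slip bound apply verbatim, whereas you re-derive everything from scratch. Your energy step (testing with $\vv=\ve 0$, so that $j_\alpha(|u_\tau|,0)=0$ and $j_\alpha(|u_\tau|,u_\tau)\ge 0$ drop out) together with the fibre-wise Friedrichs inequality on the vertical segments $\{x_1\}\times(\alpha(x_1),\omega)$ is sound and has the pleasant side effect of showing that the constant does not in fact depend on $g_{max}$ at all (which is of course compatible with the statement); the paper's constant inherits the $g_{max}$-dependence only because it is imported from \eqref{eq:estimate_aux}-type bounds. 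Your pressure step — recovering $a_\alpha(\uu,\ww)-b_\alpha(\ww,p)=(\ff,\ww)_{0,\Omega(\alpha)}$ for $\ww\in(H^1_0(\Omega(\alpha)))^2$ by testing with $\uu\pm\ww$ and then invoking the inf-sup inequality for $b_\alpha$ — is also correct, and the isometry of the zero extension finishes it. The only ingredients you do not prove are the uniform boundedness of the extension operators $\pi_\alpha$ (explicitly asserted in the paper just before the theorem) and the $\alpha$-independence of the LBB constant for the family \eqref{eq:defUad}; the latter is exactly the nontrivial content hidden inside the cited estimate (3.5) of \cite{HaSt_given_slip}, and quoting it as a standard consequence of the uniform Lipschitz character of $\{\Omega(\alpha)\}_{\alpha\in\Uad}$ is legitimate, though a precise reference (or the Bogovskii-operator argument) should accompany it if the proof is to stand alone. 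In short: the paper buys brevity by reduction to the fixed-slip-bound case, while your argument buys self-containedness and a slightly sharper constant.
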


\begin{proof}
The estimate of the first term in \eqref{eq:est_sol_alpha} follows from (3.4) in \cite{HaSt_given_slip}.
Similarly, the estimate of the second term in \eqref{eq:est_sol_alpha} is the same as (3.5) in \cite{HaSt_given_slip} using that $g\le g_{max}$ in $\R_+$.
\end{proof}

Let
\[ \G_{\mathcal P}:=\{(\alpha,\uu(\alpha),p(\alpha))|~\alpha\in\Uad,~(\uu(\alpha),p(\alpha)) \mbox{ solves }(\pstav\alpha)\} \]
be the graph of the generally multivalued solution mapping $\Phi:\alpha\mapsto(\uu(\alpha),p(\alpha))$, $\alpha\in\Uad$.

\begin{theorem}
\label{th:G_closed}
The graph $\G_{\mathcal P}$ is closed in the following sense:
\begin{equation*}
\left.\begin{aligned}
&\alpha_n\to\alpha \mbox{ in }\C^1([0,1]),~\alpha_n,\alpha\in\Uad,\\
&(\pi_{\alpha_n}\uu_n,p_n^0)\weakly(\ubar,\pbar) \mbox{ in }(H^1_0(\wOmega))^2\times L^2_0(\wOmega),\\
&\mbox{where }(\alpha_n,\uu_n,p_n):=(\alpha_n,\uu(\alpha_n),p(\alpha_n))\in\G_{\mathcal P}
\end{aligned}\right\} \Rightarrow (\ubar_{|\Omega(\alpha)},\pbar_{|\Omega(\alpha)}) \mbox{ solves }(\pstav\alpha)
\end{equation*}
and hence $(\alpha,\ubar_{|\Omega(\alpha)},\pbar_{|\Omega(\alpha)})\in\G_{\mathcal P}$.
\end{theorem}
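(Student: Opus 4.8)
The plan is to follow the transformation–to–a–fixed–domain scheme already used for the given‑slip case in \cite{HaSt_given_slip}, adding one new ingredient that makes the \emph{solution‑dependent} slip bound pass to the limit. First I would fix the reference rectangle $\widehat\Omega=(0,1)\times(0,\omega)$ and the diffeomorphisms $T_n(\hat x_1,\hat x_2)=(\hat x_1,\alpha_n(\hat x_1)+\hat x_2(\omega-\alpha_n(\hat x_1))/\omega)$ of $\widehat\Omega$ onto $\Omega(\alpha_n)$, and the analogous $T$ for $\alpha$. Since $\alpha_n\to\alpha$ in $\C^1([0,1])$, one gets $T_n\to T$, $\nabla T_n\to\nabla T$, $\det\nabla T_n\to\det\nabla T>0$ uniformly on $\widehat\Omega$, and the unit normal $\nn_n$ and tangent $\tt_n$ to $S(\alpha_n)$, read as functions of $\hat x_1\in(0,1)$, converge uniformly to $\nn$, $\tt$. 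Pulling $\uu_n,p_n$ back by $T_n$ produces sequences that, by the assumed weak convergence of $\pi_{\alpha_n}\uu_n,p_n^0$ together with the uniform convergence of $T_n$, converge weakly in $(H^1(\widehat\Omega))^2$, resp.\ $L^2_0(\widehat\Omega)$, to the pull‑backs of $\ubar,\pbar$; moreover $a_{\alpha_n}$, $b_{\alpha_n}$, the friction integrand of $j_{\alpha_n}$ and the load functional become forms over $\widehat\Omega$ whose coefficients converge uniformly to those of the $\alpha$‑forms. This reduces everything to a fixed domain with uniformly converging data.

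Next I would check that $\ubar_{|\Omega(\alpha)}\in\Vdiv(\Omega(\alpha))$. The homogeneous condition on $\Gamma(\alpha)$ passes to the weak limit because $\uu_n=\ve0$ on $\Gamma(\alpha_n)$ and $\Gamma(\alpha_n)\to\Gamma(\alpha)$; the incompressibility $\div\ubar=0$ in $\Omega(\alpha)$ follows by testing $b_{\alpha_n}(\uu_n,q)=0$ with $q\in\C_0^\infty(\Omega(\alpha))$, admissible once $\supp q\subset\Omega(\alpha_n)$, i.e.\ for $n$ large, and letting $n\to\infty$; and the impermeability $\bar u_\nu=0$ on $S(\alpha)$ is obtained by pulling the boundary traces back to $(0,1)$: the curve traces $\uu_n(\cdot,\alpha_n(\cdot))$ converge in $L^2(0,1)$ to $\ubar(\cdot,\alpha(\cdot))$ by compactness of the trace map on a tubular neighbourhood, and since $\nn_n\to\nn$ uniformly the identity $\uu_n(\cdot,\alpha_n(\cdot))\cdot\nn_n=0$ yields $\ubar(\cdot,\alpha(\cdot))\cdot\nn=0$. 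These facts concern only the geometry of the spaces $\V$ and are essentially those of \cite{HaSt_given_slip}.

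The core step is the passage to the limit in the inequality of $(\pstav{\alpha_n})$. For an arbitrary $\vv\in\V(\Omega(\alpha))$ I would first build recovery test functions $\vv_n\in\V(\Omega(\alpha_n))$ with $\pi_{\alpha_n}\vv_n\to\vv$ strongly in $(H^1)^2$: after transformation this amounts to correcting the fixed pull‑back of $\vv$ by a term of $H^1(\widehat\Omega)$‑norm $O(\norm{\nn-\nn_n}_\infty)$ so as to meet the $\alpha_n$‑dependent constraint $\widehat v_n\cdot\nn_n=0$ on the bottom side, exactly as in \cite{HaSt_given_slip}. Inserting $\vv_n$: $b_{\alpha_n}(\uu_n,p_n)=0$ because $\div\uu_n=0$; $b_{\alpha_n}(\vv_n,p_n)\to b_\alpha(\vv,\pbar)$ (weakly converging $p_n$ against strongly converging divergence, uniformly converging weights); the load term converges (strongly converging $\ff\chi_{\Omega(\alpha_n)}$ against weakly converging $\pi_{\alpha_n}(\vv_n-\uu_n)$); and, by weak lower semicontinuity of the transformed quadratic form with uniformly converging uniformly positive coefficients, $\limsup_n\bigl(a_{\alpha_n}(\uu_n,\vv_n)-a_{\alpha_n}(\uu_n,\uu_n)\bigr)\le a_\alpha(\ubar,\vv)-a_\alpha(\ubar,\ubar)$. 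For the friction terms I would use $|u_{\tau,n}|\to|\bar u_\tau|$ strongly in $L^2$ of the boundary (compactness of the trace operator), hence, along a subsequence, $|u_{\tau,n}|\to|\bar u_\tau|$ a.e., so by continuity of $g$ and \eqref{eq:asm_g} also $g(|u_{\tau,n}|)\to g(|\bar u_\tau|)$ a.e.\ with $g_{min}\le g(|u_{\tau,n}|)\le g_{max}$, whence $g(|u_{\tau,n}|)\to g(|\bar u_\tau|)$ strongly in $L^2$; combined with $v_{\tau,n}\to v_\tau$, $u_{\tau,n}\to\bar u_\tau$ in $L^2$, and the uniform convergence of the arc‑length weight, this gives $j_{\alpha_n}(|u_{\tau,n}|,v_{\tau,n})\to j_\alpha(|\bar u_\tau|,v_\tau)$ and $j_{\alpha_n}(|u_{\tau,n}|,u_{\tau,n})\to j_\alpha(|\bar u_\tau|,\bar u_\tau)$. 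Passing to the limit yields
\[ a_\alpha(\ubar,\vv-\ubar)-b_\alpha(\vv-\ubar,\pbar)+j_\alpha(|\bar u_\tau|,v_\tau)-j_\alpha(|\bar u_\tau|,\bar u_\tau)\ge\scal{\ff}{\vv-\ubar}_{0,\Omega(\alpha)} \]
for all $\vv\in\V(\Omega(\alpha))$, and $b_\alpha(\ubar,q)=0$ for all $q\in L^2_0(\Omega(\alpha))$ by a similar argument; since the extracted subsequences served only to produce a.e.\ convergence and the weak limits are unique, the original sequence works. Thus $(\ubar_{|\Omega(\alpha)},\pbar_{|\Omega(\alpha)})$ solves $(\pstav{\alpha})$.

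I expect the main obstacles to be the construction of the recovery test functions $\vv_n$ respecting the $n$‑dependent impermeability condition, together with the verification of $\bar u_\nu=0$ on $S(\alpha)$; both are purely geometric and can be taken over from \cite{HaSt_given_slip}, where the slip bound was fixed. The genuinely new point, specific to the solution‑dependent bound, is the strong $L^2$‑convergence $g(|u_{\tau,n}|)\to g(|\bar u_\tau|)$ — it is this, rather than mere weak convergence of the traces, that makes the nonlinear friction term stable under the domain perturbation, and it rests on the compactness of the trace embedding and on the continuity and boundedness of $g$.
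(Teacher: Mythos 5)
Your proposal is correct and follows essentially the same route as the paper: recovery test functions for the moving impermeability constraint taken over from Lemma 3 of \cite{HaSt_given_slip}, weak lower semicontinuity and weak--strong arguments for $a_\alpha$, $b_\alpha$ and the load term, and---for the genuinely new, solution-dependent slip term---strong $L^2((0,1))$ convergence of the tangential traces composed with $\alpha_n$, a.e.\ convergence of a subsequence, and the continuity of $g$ together with the bounds \eqref{eq:asm_g}, exactly as in the paper's treatment of $j_{\alpha_{n_k}}$. The only difference is organizational: you pull everything back to the fixed rectangle $(0,1)\times(0,\omega)$ and argue with uniformly converging coefficients, whereas the paper stays on the moving domains with the hold-all extensions $\pi_\alpha$ and cites \cite{HaSt_given_slip} and Lemma 2.21 of \cite{haslinger-makinen} for those same limit passages.
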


\begin{proof}
Let $\vv\in \V(\Omega(\alpha))$ be given.
From Lemma 3 in \cite{HaSt_given_slip} we know that there exist: a sequence $\{\vv_k\}$, $\vv_k\in(H^1(\wOmega))^2$ and a filter of indices $\{n_k\}$ such that
\begin{equation}
\label{eq:conv_vk}
\vv_k\to\vv \mbox{ in }(H^1(\wOmega))^2,~k\to\infty
\end{equation}
and 
\[ \vv_{k|\Omega(\alpha_{n_k})}\in \V(\Omega(\alpha_{n_k})). \]
Therefore $\vv_{k|\Omega(\alpha_{n_k})}$ can be used as a test function in $(\pstav{\alpha_{n_k}})$:
\begin{equation}
\label{eq:Pnk}
\left.
\begin{aligned}
&a_{\alpha_{n_k}}(\uu_{n_k},\vv_k-\uu_{n_k})-b_{\alpha_{n_k}}(\vv_k-\uu_{n_k},p_{n_k})\\
&\qquad  + j_{\alpha_{n_k}}(|u_{{n_k}\tau}|,v_{k\tau}) - j_{\alpha_{n_k}}(|u_{n_k\tau}|,u_{n_k\tau})\\
&\qquad  \ge \scal{\ff}{\vv_k-\uu_{n_k}}_{0,\Omega(\alpha_{n_k})},\\\\
&\forall q\in L^2_0(\Omega(\alpha_{n_k})):~b_{\alpha_{n_k}}(\uu_{n_k},q)=0.
\end{aligned}
\right\}
\end{equation}
Denote $(\uu(\alpha),p(\alpha)):=(\ubar_{|\Omega(\alpha)},\pbar_{|\Omega(\alpha)})$.
The fact that $\uu(\alpha)\in\Vdiv(\Omega(\alpha))$ and the following limit passages can be proven exactly as in \cite{HaSt_given_slip}:
\[ \limsup_{k\to\infty} a_{\alpha_{n_k}}(\uu_{n_k},\vv_k-\uu_{n_k}) \le a_\alpha(\uu(\alpha),\vv-\uu(\alpha)), \]
\[ \lim_{k\to\infty} b_{\alpha_{n_k}}(\vv_k-\uu_{n_k},p_{n_k}) = b_\alpha(\vv-\uu(\alpha),p(\alpha)), \]
\[ \lim_{k\to\infty} \scal{\ff}{\vv_k-\uu_{n_k}}_{0,\Omega(\alpha_{n_k})} = \scal{\ff}{\vv-\uu(\alpha)}_{0,\Omega(\alpha_{n_k})}. \]
It remains to pass to the limit in the slip terms given by $j$.

From \eqref{eq:conv_vk} and weak convergence $\pi_{\alpha_{n_k}}(\uu_{n_k})\weakly\ubar$ in $(H^1(\wOmega))^2$ it follows (see Lemma 2.21, \cite{haslinger-makinen}):
\begin{equation*}
\left.\begin{aligned}
\uu_{n_k|S(\alpha_{n_k})}\circ\alpha_{n_k} &\to \ubar_{|S(\alpha)}\circ\alpha\\
\vv_{k|S(\alpha_{n_k})}\circ\alpha_{n_k} &\to \vv_{|S(\alpha)}\circ\alpha
\end{aligned}\right\} \mbox{ in }(L^2(0,1))^2
\end{equation*}
and also
\begin{equation}
\label{eq:conv_unktau_vktau}
\left.\begin{aligned}
u_{n_k\tau}\circ\alpha_{n_k} := (\uu_{n_k|S(\alpha_{n_k})}\cdot\ve\tau^{\alpha_{n_k}})\circ\alpha_{n_k} &\to (\ubar_{|S(\alpha)}\cdot\ve\tau^\alpha)\circ\alpha=\overline u_\tau\circ\alpha\\
v_{k\tau}\circ\alpha_{n_k}:=(\vv_{k|S(\alpha_{n_k})}\cdot\ve\tau^{\alpha_{n_k}})\circ\alpha_{n_k} &\to (\vv_{|S(\alpha)}\cdot\ve\tau^\alpha)\circ\alpha=v_\tau\circ\alpha
\end{aligned}\right\} \mbox{ in }L^2(0,1),
\end{equation}
using that $\ve\tau^{\alpha_{n_k}}\circ\alpha_{n_k}\rightrightarrows\ve\tau^\alpha\circ\alpha$ (uniformly) in $[0,1]$, where $\ve\tau^\beta$ stands for the unit tangential vector to $S(\beta)$, $\beta\in\Uad$ (see \cite{HaSt_given_slip}).
Consequently, there exists a subsequence of $\{u_{n_k\tau}\circ\alpha_{n_k}\}$ (denoted by the same symbol) such that
\begin{equation}
\label{eq:conv_unktau_ae}
u_{n_k\tau}\circ\alpha_{n_k}\to\overline u_\tau\circ\alpha \mbox{ a.e. in }(0,1).
\end{equation}
Hence
\begin{multline*}
j_{\alpha_{n_k}}(|u_{n_k\tau}|,v_{k\tau})=\int_0^1 g(|u_{n_k\tau}\circ\alpha_{n_k}|)v_{k\tau}\circ\alpha_{n_k}\sqrt{1+(\alpha_{n_k}')^2} dx_1\\
\to \int_0^1 g(|\overline u_\tau\circ\alpha|)v_\tau\circ\alpha\sqrt{1+(\alpha')^2}dx_1 = j_\alpha(|\overline u_\tau|,v_\tau),~k\to\infty,
\end{multline*}
making use of $\eqref{eq:conv_unktau_vktau}_2$, \eqref{eq:conv_unktau_ae} and convergence $\alpha_n\to\alpha$ in $\C^1([0,1])$.
The limit passage for the second slip term in \eqref{eq:Pnk} can be done in the same way.
\end{proof}

\subsection{Stability of \eqref{eq:stokes_weak_4field}}

Let $\Uad$ be defined again by \eqref{eq:defUad}.
We keep notation of Subsection \ref{sec:stability_P}, i.e. $\Omega(\alpha)$, $S(\alpha)$, $j_\alpha$, $a_\alpha$, $b_\alpha$, $\nn^\alpha$, $\tt^\alpha$ have the same meaning.
In addition, $\dual{\ }{\ }_\alpha$ will denote the duality pairing between $H^{-1/2}(S(\alpha))$ and $H^{1/2}(S(\alpha))$.
If $\mu^\nu\in H^{-1/2}(S(\alpha))$ and $\vv\in(H^1(\Omega(\alpha)))^2$ then
\[ \dual{\mu^\nu}{v_\nu}_\alpha := \dual{\mu^\nu}{\vv_{|S(\alpha)}\cdot\nn^\alpha}_\alpha \quad \forall\alpha\in\Uad \]
and similarly for $\dual{\mu^\tau}{v_\tau}_\alpha$, $\mu^\tau\in H^{-1/2}(S(\alpha))$.
Problem \eqref{eq:stokes_weak_4field} formulated on $\Omega:=\Omega(\alpha)$ will be denoted by $(\mathcal M(\alpha))$.
\bigskip

Let
\begin{equation*}
\G_{\mathcal M} = \{(\alpha,\uu(\alpha),p(\alpha),\sigma^\nu(\alpha),\sigma^\tau(\alpha))|~\alpha\in\Uad,~(\uu(\alpha),p(\alpha),\sigma^\nu(\alpha),\sigma^\tau(\alpha))\mbox{ solves }(\mathcal M(\alpha))\}
\end{equation*}
be the graph of the respective solution mapping.

\begin{theorem}
\label{th:domain_dependence_4field}
The graph $\G_{\mathcal M}$ is closed in the following sense:
Let
\begin{equation}
\label{eq:conv_alpha}
\alpha_n\to\alpha \mbox{ in }\C^1([0,1]), ~\alpha_n,\alpha\in\Uad
\end{equation}
and 
\begin{equation}
\label{eq:conv_up_alpha}
(\pi_{\alpha_n}\uu_n,p_n^0) \weakly (\ubar,\pbar) \mbox{ in }(H^1_0(\wOmega))^2\times L^2_0(\wOmega).
\end{equation}
Then also
\begin{align}
\label{eq:conv_sigma_nu}
\dual{\sigma^\nu_n}{v_\nu}_{\alpha_n} &\to \dual{\sigma^\nu(\alpha)}{v_\nu}_\alpha,\\
\label{eq:conv_sigma_tau}
\scal{\sigma^\tau_n}{v_\tau}_{0,S(\alpha_n)} &\to \scal{\sigma^\tau(\alpha)}{v_\tau}_{0,S(\alpha)}
\end{align}
holds for every $\vv\in(H^1_0(\wOmega))^2$, where $(\uu_n,p_n,\sigma^\nu_n,\sigma^\tau_n)$ is a solution to $(\mathcal M(\alpha_n))$, $n=1,\ldots$
In addition, the quadruplet $(\ubar_{|\Omega(\alpha)},\pbar_{|\Omega(\alpha)},\sigma^\nu(\alpha),\sigma^\tau(\alpha))$ solves $(\mathcal M(\alpha))$.
\end{theorem}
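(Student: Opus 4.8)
The plan is to reduce the claim to the stability of $(\pstav\alpha)$ proven in Theorem~\ref{th:G_closed} and to the unique solvability of the four-field problem with a frozen slip bound (Theorem~\ref{th:well_posed_4field_psi}). First I would note that any solution of $(\mathcal M(\alpha_n))$ projects onto a solution of $(\pstav{\alpha_n})$ — this is precisely the interpretation of \eqref{eq:stokes_weak_4field_psi} recorded after Theorem~\ref{th:well_posed_4field_psi} — so, together with \eqref{eq:conv_alpha}--\eqref{eq:conv_up_alpha}, the hypotheses of Theorem~\ref{th:G_closed} are satisfied by $(\alpha_n,\uu_n,p_n)$. Hence $\uu:=\ubar_{|\Omega(\alpha)}\in\Vdiv(\Omega(\alpha))$, $u_\nu=0$ on $S(\alpha)$, and $(\uu,\pbar_{|\Omega(\alpha)})$ solves $(\pstav\alpha)$; since $\uu\in\V(\Omega(\alpha))$ and $S(\alpha)\in\C^{1,1}$ we also have $|\overline u_\tau|\in H^{1/2}_+(S(\alpha))$, so Theorem~\ref{th:well_posed_4field_psi} yields a \emph{unique} solution $(\uu^\ast,p^\ast,\sigma^{\nu\ast},\sigma^{\tau\ast})$ of $(\mathcal M^{\varphi}(\alpha))$ with $\varphi=|\overline u_\tau|$, which — because $|u^\ast_\tau|=|\overline u_\tau|$ — is at the same time a solution of $(\mathcal M(\alpha))$. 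The whole argument then reduces to identifying the (a~priori merely subsequential) limits of $\sigma^\nu_n$ and $\sigma^\tau_n$ with $\sigma^{\nu\ast}$ and $\sigma^{\tau\ast}$: once this limit is shown to be unique, \eqref{eq:conv_sigma_nu}--\eqref{eq:conv_sigma_tau} and $(\ubar_{|\Omega(\alpha)},\pbar_{|\Omega(\alpha)},\sigma^\nu(\alpha),\sigma^\tau(\alpha))\in\G_{\mathcal M}$ follow along the full sequence by the usual subsequence argument.

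For the compactness I would exploit $\sigma^\tau_n\in K(|u_{n\tau}|)$, which with \eqref{eq:asm_g} gives $|\sigma^\tau_n|\le g_{max}$ a.e.\ on $S(\alpha_n)$. Parametrising $S(\alpha_n)$ by $x_1\in(0,1)$ and putting $\tilde\sigma^\tau_n(x_1):=\sigma^\tau_n(x_1,\alpha_n(x_1))$, the sequence $\{\tilde\sigma^\tau_n\}$ is bounded in $L^\infty(0,1)$; after passing to a subsequence $\tilde\sigma^\tau_n\weakly\tilde\sigma^\tau$ in $L^2(0,1)$, and $\sigma^\tau(\alpha)$ is the function on $S(\alpha)$ with $\sigma^\tau(\alpha)(x_1,\alpha(x_1)):=\tilde\sigma^\tau(x_1)$. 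No separate compactness is needed for $\sigma^\nu_n$: since the restriction to $\Omega(\alpha_n)$ of any $\vv\in(H^1_0(\wOmega))^2$ belongs to $\W(\Omega(\alpha_n))$, testing $(\mathcal M(\alpha_n))_1$ with such $\vv$ expresses $\dual{\sigma^\nu_n}{v_\nu}_{\alpha_n}$ through $a_{\alpha_n}(\uu_n,\vv)$, $b_{\alpha_n}(\vv,p_n)$, $\scal{\ff}{\vv}_{0,\Omega(\alpha_n)}$ and $\scal{\sigma^\tau_n}{v_\tau}_{0,S(\alpha_n)}$, so its convergence is a consequence of the convergence of those terms (and $\scal{\sigma^\tau_n}{v_\tau}_{0,S(\alpha_n)}\to\scal{\sigma^\tau(\alpha)}{v_\tau}_{0,S(\alpha)}$ because the factor $v_\tau\circ\alpha_n\sqrt{1+(\alpha_n')^2}$ converges strongly in $L^2(0,1)$ while $\tilde\sigma^\tau_n\weakly\tilde\sigma^\tau$, which already gives \eqref{eq:conv_sigma_tau} and \eqref{eq:conv_sigma_nu} along the subsequence).

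The core is the passage to the limit in $(\mathcal M(\alpha_n))_1$ needed to check that the limit quadruplet solves $(\mathcal M^{\varphi}(\alpha))$. For a fixed $\vv\in\W(\Omega(\alpha))$ I would take, by a construction analogous to (and simpler than) Lemma~3 in \cite{HaSt_given_slip}, a sequence $\vv_k\to\vv$ in $(H^1(\wOmega))^2$ with $\vv_{k|\Omega(\alpha_{n_k})}\in\W(\Omega(\alpha_{n_k}))$; and if $\vv\in\V(\Omega(\alpha))$, the construction can be arranged — exactly as in the proof of Theorem~\ref{th:G_closed} — so that moreover $\vv_{k|\Omega(\alpha_{n_k})}\in\V(\Omega(\alpha_{n_k}))$, in which case $\dual{\sigma^\nu_{n_k}}{v_{k\nu}}_{\alpha_{n_k}}=0$. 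In $(\mathcal M(\alpha_{n_k}))_1$ written with $\vv_k$, the terms built from $a_{\alpha_{n_k}}$, $b_{\alpha_{n_k}}$ and $(\ff,\cdot)$ pass to the limit by the convergence arguments (product of a weakly and a strongly convergent sequence) already used in \cite{HaSt_given_slip}, and $\scal{\sigma^\tau_{n_k}}{v_{k\tau}}_{0,S(\alpha_{n_k})}\to\scal{\sigma^\tau(\alpha)}{v_\tau}_{0,S(\alpha)}$ because, in the parametrisation, the test factor $v_{k\tau}\circ\alpha_{n_k}\sqrt{1+(\alpha_{n_k}')^2}$ converges \emph{strongly} in $L^2(0,1)$ (by the second relation of \eqref{eq:conv_unktau_vktau} and $\alpha_n\to\alpha$ in $\C^1([0,1])$) while $\tilde\sigma^\tau_{n_k}\weakly\tilde\sigma^\tau$. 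Taking first $\vv=\ww\in\V(\Omega(\alpha))$ produces
\[ a_\alpha(\uu,\ww)-b_\alpha(\ww,\pbar)-\scal{\sigma^\tau(\alpha)}{w_\tau}_{0,S(\alpha)}=\scal{\ff}{\ww}_{0,\Omega(\alpha)}\qquad\forall\ww\in\V(\Omega(\alpha)), \]
and then, for general $\vv\in\W(\Omega(\alpha))$, the limit of $\dual{\sigma^\nu_{n_k}}{v_{k\nu}}_{\alpha_{n_k}}$ is a bounded linear functional of $\vv$ which annihilates $\V(\Omega(\alpha))$; consequently it equals $\dual{\sigma^\nu(\alpha)}{v_\nu}_\alpha$ for a unique $\sigma^\nu(\alpha)\in H^{-1/2}(S(\alpha))$, the boundedness being obtained by testing with the harmonic lifting \eqref{eq:laplace_wphi} of $(v_\nu,0)$. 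This gives $(\mathcal M^{\varphi}(\alpha))_1$, whereas $(\mathcal M^{\varphi}(\alpha))_{2,3}$ are already contained in $\uu\in\Vdiv(\Omega(\alpha))$ with $u_\nu=0$ on $S(\alpha)$.

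The step I expect to be the main obstacle is the constraint relation $(\mathcal M^{\varphi}(\alpha))_4$, because $\sigma^\tau_n$ converges only weakly while the constraint set $K(|u_{n\tau}|)$ itself varies with $n$. I would split it into two parts. To obtain $\sigma^\tau(\alpha)\in K(|\overline u_\tau|)$, integrate the a.e.\ bound $|\tilde\sigma^\tau_n|\le g(|u_{n\tau}\circ\alpha_n|)$ against $\pm\chi_E$ over an arbitrary measurable $E\subset(0,1)$ and let $n\to\infty$, using $u_{n\tau}\circ\alpha_n\to\overline u_\tau\circ\alpha$ in $L^2(0,1)$ (as in Theorem~\ref{th:G_closed}, via Lemma~2.21 in \cite{haslinger-makinen}), the continuity and boundedness of $g$, and $\tilde\sigma^\tau_n\weakly\tilde\sigma^\tau$; this yields $|\tilde\sigma^\tau|\le g(|\overline u_\tau\circ\alpha|)$ a.e. For the variational part, rewrite the fourth relation of $(\mathcal M(\alpha_n))$ in the equivalent ``energy'' form $\scal{\sigma^\tau_n}{u_{n\tau}}_{0,S(\alpha_n)}=-j_{\alpha_n}(|u_{n\tau}|,u_{n\tau})$ and pass to the limit: the left-hand side is once more a product of a weakly convergent ($\tilde\sigma^\tau_n$) and a strongly convergent ($u_{n\tau}\circ\alpha_n$) sequence in $L^2(0,1)$, and on the right-hand side one uses $g(|u_{n\tau}\circ\alpha_n|)\to g(|\overline u_\tau\circ\alpha|)$ in $L^2(0,1)$, $u_{n\tau}\circ\alpha_n\to\overline u_\tau\circ\alpha$ in $L^2(0,1)$ and $\sqrt{1+(\alpha_n')^2}\rightrightarrows\sqrt{1+(\alpha')^2}$. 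This gives $\scal{\sigma^\tau(\alpha)}{u_\tau}_{0,S(\alpha)}=-j_\alpha(|u_\tau|,u_\tau)=-\sup_{\mu^\tau\in K(|u_\tau|)}\scal{\mu^\tau}{u_\tau}_{0,S(\alpha)}$, which together with $\sigma^\tau(\alpha)\in K(|u_\tau|)$ is exactly $(\mathcal M^{\varphi}(\alpha))_4$. Hence the limit quadruplet solves $(\mathcal M^{\varphi}(\alpha))$ with $\varphi=|\overline u_\tau|$; by the uniqueness in Theorem~\ref{th:well_posed_4field_psi} it coincides with $(\uu^\ast,p^\ast,\sigma^{\nu\ast},\sigma^{\tau\ast})$, and since this limit is independent of the extracted subsequence, the whole sequence converges in the sense of \eqref{eq:conv_sigma_nu}--\eqref{eq:conv_sigma_tau} and $(\ubar_{|\Omega(\alpha)},\pbar_{|\Omega(\alpha)},\sigma^\nu(\alpha),\sigma^\tau(\alpha))$ solves $(\mathcal M(\alpha))$.
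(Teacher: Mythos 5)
Your proposal is correct and its global skeleton coincides with the paper's proof: reduce the $(\uu,p)$-part to Theorem \ref{th:G_closed}, use the uniform bound $|\sigma^\tau_n|\le g_{max}$ to extract a weak $L^2(0,1)$ limit of the parametrized shear stresses, pass to the limit in $(\mathcal M(\alpha_n))_1$ by weak--strong pairing, show the limit belongs to $K(|\overline u_\tau|)$, and identify it by testing with $\V(\Omega(\alpha))$-functions built via Lemma~3 of \cite{HaSt_given_slip}. The genuinely different ingredient is your treatment of the constraint relation $(\mathcal M)_4$: the paper fixes an arbitrary $\mu^\tau\in K(|u_\tau(\alpha)|)$, constructs a recovery sequence $\mu^\tau_n\in K(|u_{n\tau}|)$ with $\hat\mu^\tau_n\to\hat\mu^\tau$ in $L^2(0,1)$ by means of its auxiliary Lemma \ref{th:lemma_conv_linfty} (the five-set decomposition) combined with Lebesgue's theorem, and then passes to the limit in the inequality; you instead pass to the limit in the equivalent complementarity identity $\scal{\sigma^\tau_n}{u_{n\tau}}_{0,S(\alpha_n)}=-j_{\alpha_n}(|u_{n\tau}|,u_{n\tau})$ and recover $(\mathcal M)_4$ from this identity together with $\sigma^\tau(\alpha)\in K(|\overline u_\tau|)$. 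Both arguments are valid; yours is more elementary and renders Lemma \ref{th:lemma_conv_linfty} unnecessary, whereas the paper's recovery construction gives a Mosco-type convergence of the sets $K(|u_{n\tau}|)$, a more robust tool that would survive situations where the variational inequality cannot be condensed into a single scalar identity. Two minor further differences: you make explicit what the paper leaves implicit, namely that $\sigma^\nu(\alpha),\sigma^\tau(\alpha)$ are the multipliers of the unique solution of $(\mathcal M^{\varphi}(\alpha))$ with frozen $\varphi=|\overline u_\tau|$ (Theorem \ref{th:well_posed_4field_psi}), whose velocity--pressure part coincides with $(\ubar_{|\Omega(\alpha)},\pbar_{|\Omega(\alpha)})$ by uniqueness of the auxiliary problem, and that this uniqueness is what upgrades the subsequential convergences \eqref{eq:conv_sigma_nu}--\eqref{eq:conv_sigma_tau} to the whole sequence; also, for test functions in $\W(\Omega(\alpha))$ no approximating sequence is needed at all, since $\W(\Omega(\beta))=(H^1_0(\wOmega))^2_{|\Omega(\beta)}$ for every $\beta\in\Uad$, so a single extension serves all $n$ --- the Lemma-3 construction is only required for the $\V(\Omega(\alpha))$-test functions used in the identification step.
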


\bigskip

To prove this theorem we shall need the following auxiliary result.
\begin{lemma}
\label{th:lemma_conv_linfty}
Let $g,h\in L^\infty((0,1))$, $g\ge 0$ be such that $|h|\le g$ a.e. in $(0,1)$ and $\{g_n\}$, $g_n\in L^\infty((0,1))$ be a sequence of nonnegative functions, $g_n\to g$ a.e. in $(0,1)$. 
Then there exists a sequence $\{h_n\}$, $|h_n|\le g_n$ a.e. in $(0,1)$ such that $h_n\to h$ a.e. in $(0,1)$.
\end{lemma}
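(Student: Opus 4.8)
The plan is to construct $h_n$ by truncating $h$ radially so that its modulus never exceeds $g_n$. Concretely, I would set
\[
 h_n(x) := \begin{cases}
 h(x), & |h(x)| \le g_n(x),\\[1mm]
 g_n(x)\,\dfrac{h(x)}{|h(x)|}, & |h(x)| > g_n(x),
 \end{cases}
 \qquad x \in (0,1),
\]
observing that every $x$ with $h(x)=0$ falls into the first case. Since $g_n$, $h$ and $|h|$ are measurable, so is $h_n$; by construction $|h_n| = \min\{|h|, g_n\} \le g_n$ a.e.\ in $(0,1)$, and $|h_n| \le |h|$ shows $h_n \in L^\infty((0,1))$. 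Hence the bound demanded in the statement holds automatically, and it only remains to verify the pointwise convergence $h_n \to h$.

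For this I would fix a point $x$ in the set of full measure on which simultaneously $g_n(x) \to g(x)$ and $|h(x)| \le g(x)$, and distinguish two cases. If $|h(x)| < g(x)$, then $g_n(x) \to g(x) > |h(x)|$ forces $g_n(x) > |h(x)|$ for all sufficiently large $n$, so $h_n(x) = h(x)$ eventually and hence $h_n(x) \to h(x)$. If $|h(x)| = g(x)$, I would write $h_n(x) = \lambda_n(x)\,h(x)$ with $\lambda_n(x) = \min\{1,\, g_n(x)/|h(x)|\}$ when $h(x) \ne 0$; since $g_n(x)/|h(x)| \to g(x)/|h(x)| = 1$ we get $\lambda_n(x) \to 1$ and therefore $h_n(x) \to h(x)$. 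If instead $h(x) = 0$, then $g(x) = 0$ as well, and $0 \le |h_n(x)| \le g_n(x) \to 0$ again yields $h_n(x) \to 0 = h(x)$. Thus $h_n \to h$ a.e.\ in $(0,1)$, which, together with $|h_n| \le g_n$ a.e., proves the lemma.

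The argument is entirely elementary and the construction is essentially forced, so I do not expect a genuine obstacle. The only point that needs a little care is the set $\{x : |h(x)| = g(x)\}$: there $h_n$ generally does not coincide with $h$ for large $n$, so one cannot argue by eventual equality and must instead combine the squeeze $|h_n| \le g_n \to |h|$ with the fact that $h_n$ is, by construction, a nonnegative scalar multiple of $h$. Everything else — measurability of $h_n$, the pointwise inequality $|h_n| \le g_n$, and the convergence away from that exceptional set — is immediate.
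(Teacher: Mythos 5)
Your proof is correct and takes essentially the same route as the paper: an explicit pointwise truncation of $h$ by the bound $g_n$, followed by an a.e.\ case analysis distinguishing $|h(x)|<g(x)$ from $|h(x)|=g(x)$. The only difference is cosmetic — your single formula $h_n=\operatorname{sign}(h)\min\{|h|,g_n\}$ replaces the paper's five-set decomposition (the sets $h=0$, $h=\pm g\neq 0$, $0<|h|<g$, $-g<h<0$), and the convergence argument is the same in substance.
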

\begin{proof}
The interval $(0,1)$ will be decomposed as follows: $(0,1)=\cup_{i=1}^5 I_i$, where
\[
\begin{aligned}
I_1 &= \{x\in(0,1)|~h(x)=0\},\\
I_2 &= \{x\in(0,1)|~h(x)=g(x),~g(x)\neq 0\},\\
I_3 &= \{x\in(0,1)|~h(x)=-g(x),~g(x)\neq 0\},\\
I_4 &= \{x\in(0,1)|~h(x)\in(0,g(x))\},\\
I_5 &= \{x\in(0,1)|~h(x)\in(-g(x),0)\}.
\end{aligned}
\]
The sequence $\{h_n\}$ defined by
\[
h_n = \begin{cases}
0 &\mbox{ on }I_1,\\
g_n&\mbox{ on }I_2,\\
-g_n&\mbox{ on }I_3,\\
\min(h,g_n)&\mbox{ on }I_4,\\
\max(h,g_n)&\mbox{ on }I_5
\end{cases}
\]
has the required properties.
\end{proof}

\bigskip

\begin{proof}[Proof of Theorem \ref{th:domain_dependence_4field}]
In virtue of Theorem \ref{th:G_closed} we have to prove \eqref{eq:conv_sigma_nu} and \eqref{eq:conv_sigma_tau} only.
We use the formulation of $(\mathcal M(\alpha_n))$:
\begin{equation}
\label{eq:stokes_weak_4field_n}
\left.\begin{array}{ll}
\multicolumn{2}{l}{(\uu_n,p_n,\sigma^\nu_n,\sigma^\tau_n)\in \W(\Omega(\alpha_n))\times L^2_0(\Omega(\alpha_n))\times H^{-1/2}(S(\alpha_n))\times K(|u_{n\tau|S(\alpha_n)}|):}\\\\
\forall\vv\in (H^1_0(\wOmega))^2:& \dual{\sigma^\nu_n}{v_\nu}_{\alpha_n} =  a_{\alpha_n}(\uu_n,\vv) - b_{\alpha_n}(\vv,p_n) \\
&\hspace{\stretch{1}} - \scal{\sigma^\tau_n}{v_\tau}_{0,S(\alpha_n)} - (\ff,\vv)_{0,\Omega(\alpha_n)},\\\\
\forall q\in L^2_0(\Omega(\alpha_n)):& b_{\alpha_n}(\uu_n,q)=0,\\\\
\forall \mu^\nu\in H^{-1/2}(S(\alpha_n)):& \dual{\mu^\nu}{u_{n\nu}}_{\alpha_n} = 0,\\\\
\forall \mu^\tau\in K(|u_{n\tau|S(\alpha_n)}|):& \scal{\mu^\tau+\sigma^\tau_n}{u_{n\tau}}_{0,S(\alpha_n)} \le 0,
\end{array}\right\}
\end{equation}
where
\[ K(|u_{n\tau|S(\alpha_n)}|) = \{\mu^\tau\in L^2(S(\alpha_n))|~|\mu^\tau|\le g(|\uu_{n|S(\alpha_n)}\cdot\tt^{\alpha_n}|)\mbox{ a.e. in }S(\alpha_n)\}. \]
Observe that one can use test functions $\vv\in (H^1_0(\wOmega))^2$ in $\eqref{eq:stokes_weak_4field_n}_1$ since $\W(\Omega(\beta))=(H^1_0(\wOmega))^2_{|\Omega(\beta)}$ $\forall\beta\in\Uad$.
\bigskip

Denote $\uu(\alpha):=\ubar_{|\Omega(\alpha)}$ and $p(\alpha):=\pbar_{|\Omega(\alpha)}$.
Letting $n\to\infty$ in $\eqref{eq:stokes_weak_4field_n}_1$ we obtain:
\begin{equation}
\lim_{n\to\infty}\dual{\sigma^\nu_n}{v_\nu}_{\alpha_n} =  a_\alpha(\uu(\alpha),\vv) - b_\alpha(\vv,p(\alpha)) - \scal{\ff}{\vv}_{0,\Omega(\alpha)} - \lim_{n\to\infty}\scal{\sigma^\tau_n}{v_\tau}_{0,S(\alpha_n)}.
\end{equation}
Further
\begin{multline}
\lim_{n\to\infty}\scal{\sigma^\tau_n}{v_\tau}_{0,S(\alpha_n)} = \lim_{n\to\infty}\int_{S(\alpha_n)}\sigma^\tau_n\vv\cdot\tt^{\alpha_n}\\
= \lim_{n\to\infty}\int_0^1\hat\sigma^\tau_n(\vv_{|S(\alpha_n)}\cdot\tt^{\alpha_n})\circ\alpha_n\sqrt{1+(\alpha_n')^2}~dx_1,
\end{multline}
where $\hat\sigma^\tau_n=\sigma^\tau_n\circ\alpha_n$.
Clearly $|\hat\sigma^\tau_n|\le g(|\widehat{u_{n\tau}}_{|S(\alpha_n)}|)$ a.e. in $(0,1)$ with $\widehat{u_{n\tau}}_{|S(\alpha_n)}=(\uu_{n|S(\alpha_n)}\cdot\tt^{\alpha_n})\circ\alpha_n$.
Hence there exists a subsequence of $\{\hat\sigma^\tau_n\}$ (denoted by the same symbol) and an element $\hat\theta^\tau\in L^2((0,1))$ such that
\begin{equation}
\label{eq:conv_sigma_tau_hat}
\hat\sigma^\tau_n\weakly \hat\theta^\tau \mbox{ in }L^2((0,1)).
\end{equation}
It is easy to show that $|\hat\theta^\tau|\le g(\widehat{u_\tau(\alpha)}_{|S(\alpha)})$ a.e. in $(0,1)$ making use of \eqref{eq:conv_unktau_ae} and \eqref{eq:conv_sigma_tau_hat}.
Since $\alpha_n\to\alpha$ in $\C^1([0,1])$ and $\tt^{\alpha_n}\circ\alpha_n\rightrightarrows\tt^\alpha\circ\alpha$ in $[0,1]$, we finally obtain:
\begin{equation}
\label{eq:lim_sigma_tau_n}
\lim_{n\to\infty}\scal{\sigma^\tau_n}{v_\tau}_{0,S(\alpha_n)} = \int_0^1\hat\theta^\tau(\vv_{|S(\alpha)}\cdot\tt^\alpha)\circ\alpha\sqrt{1+(\alpha')^2}~dx_1 = \scal{\theta^\tau}{v_\tau}_{0,S(\alpha)},
\end{equation}
where $\theta^\tau$ is defined by $\hat\theta^\tau=\theta^\tau\circ\alpha$.
Thus $\theta^\tau\in K(|u_\tau(\alpha)_{|S(\alpha)}|)$.

Since $\uu(\alpha)\in\Vdiv(\Omega(\alpha))$ we have
\begin{equation}
\label{eq:vanish_b_alpha}
b_\alpha(\uu(\alpha),q)  = 0 ~\forall q\in L^2_0(\Omega(\alpha)),
\end{equation}
and
\begin{equation}
\label{eq:vanish_u_nu_alpha}
\dual{\mu^\nu}{u_\nu(\alpha)}_{\alpha} = 0 ~\forall\mu^\nu\in H^{-1/2}(S(\alpha)).
\end{equation}
\\
To verify the last inequality $\eqref{eq:stokes_weak_4field_n}_4$, let $\mu^\tau\in K(|u_\tau(\alpha)_{|S(\alpha)}|)$ be arbitrary.
Then $|\hat\mu^\tau|\le g(|\widehat{u_\tau(\alpha)}_{|S(\alpha)}|)$ a.e. in $(0,1)$.
From Lemma \ref{th:lemma_conv_linfty} and the Lebesgue theorem we know that there exists a sequence $\{\hat\mu^\tau_n\}$, $|\hat\mu^\tau_n|\le g(|\widehat{u_{n\tau}}_{|S(\alpha_n)}|)$ a.e. in $(0,1)$ such that
\[ \hat\mu^\tau_n \to \hat\mu^\tau \mbox{ in }L^2((0,1)). \]
Consequently,
\begin{multline*}
0\ge \scal{\mu^\tau_n+\sigma^\tau_n}{u_{n\tau}}_{0,S(\alpha_n)} = \int_0^1(\hat\mu^\tau_n+\hat\sigma^\tau_n)(\uu_{n|S(\alpha_n)}\cdot\tt^{\alpha_n})\circ\alpha_n\sqrt{1+(\alpha_n')^2}~dx_1\\
\xrightarrow{n\to\infty} \int_0^1(\hat\mu^\tau+\hat\theta^\tau)(\uu(\alpha)_{|S(\alpha)}\cdot\tt^\alpha)\circ\alpha\sqrt{1+(\alpha')^2}~dx_1
= \scal{\mu^\tau+\theta^\tau}{u_\tau(\alpha)}_{0,S(\alpha)}.
\end{multline*}
So far we have shown that there exists a function $\theta^\tau\in K(|u_\tau(\alpha)_{|S(\alpha)}|)$ such that
\begin{equation}
\label{eq:conv_alpha_so_far}
\left.\begin{aligned}
&\lim_{n\to\infty}\dual{\sigma^\nu_n}{v_\nu}_{\alpha_n} =  a_\alpha(\uu(\alpha),\vv) - b_\alpha(\vv,p(\alpha))\\
&\hspace{3cm} - \scal{\theta^\tau}{v_\tau}_{0,S(\alpha)} - \scal{\ff}{\vv}_{0,\Omega(\alpha)} ~~\forall\vv\in(H^1_0(\wOmega))^2,\\\\
&\scal{\mu^\tau+\theta^\tau}{u_\tau(\alpha)}_{0,S(\alpha)} \le 0 ~~\forall\mu^\tau\in K(|u_\tau(\alpha)_{|S(\alpha)}|),\\\\
&+ \eqref{eq:vanish_b_alpha}\mbox{ and }\eqref{eq:vanish_u_nu_alpha}.
\end{aligned}\right\}
\end{equation}
To finish the proof we have to show that $\theta^\tau=\sigma^\tau(\alpha)$.
Indeed, then \eqref{eq:conv_sigma_nu} follows from the fact that the right hand side of $\eqref{eq:conv_alpha_so_far}_1$ is equal to $\dual{\sigma^\nu(\alpha)}{v_\nu}_{\alpha}$ and \eqref{eq:conv_sigma_tau} from \eqref{eq:lim_sigma_tau_n}.
We use again Lemma 3 from \cite{HaSt_given_slip}: for any $\vv\in\V(\Omega(\alpha))$ there exists a sequence $\{\vv_k\}$, $\vv_k\in(H^1_0(\wOmega))^2$ and a filter of indices $\{n_k\}$ such that
\begin{equation}
\vv_k \to \vbar \mbox{ in }(H^1_0(\wOmega))^2
\end{equation}
and
\begin{equation}
\vv_{k|\Omega(\alpha_{n_k})}\in\V(\Omega(\alpha_{n_k})),
\end{equation}
where $\vbar_{|\Omega(\alpha)}=\vv$.
The definition of $(\mathcal M(\alpha_{n_k}))$ yields:
\begin{equation*}
0=\dual{\sigma^\nu_{n_k}}{v_{k\nu}}_{\alpha_{n_k}} =  a_{\alpha_{n_k}}(\uu_{n_k},\vv_k)
- b_{\alpha_{n_k}}(\vv_k,p_{n_k}) - \scal{\sigma^\tau_{n_k}}{v_{k\tau}}_{0,S(\alpha_{n_k})}  - \scal{\ff}{\vv_k}_{0,\Omega(\alpha_{n_k})}.
\end{equation*}
Letting $k\to\infty$ and using \eqref{eq:conv_alpha_so_far} we obtain as before:
\begin{equation*}
0 = a_\alpha(\uu(\alpha),\vv) - b_\alpha(\vv,p(\alpha)) - \scal{\theta^\tau}{v_\tau}_{0,S(\alpha)} - \scal{\ff}{\vv}_{0,\Omega(\alpha)}
= \scal{\sigma^\tau(\alpha)}{v_\tau}_{0,S(\alpha)} - \scal{\theta^\tau}{v_\tau}_{0,S(\alpha)}
\end{equation*}
holds for any $\vv\in\V(\Omega(\alpha))$.
Hence $\sigma^\tau(\alpha)=\theta^\tau$.
\end{proof}

\section{Application of the stability property in optimal shape design problems}
\label{sec:shape_optimization}

On the basis of the results of Section \ref{sec:shape_stability} it is easy to prove the existence of solutions to a class of optimal shape design problems for systems governed by the Stokes equation with a solution dependent slip bound.

Let $\J_{\mathcal P}:\G_{\mathcal P}\to\R$ and $\J_{\mathcal M}:\G_{\mathcal M}\to\R$ be cost functionals defined on the graphs of the solution mappings corresponding to $(\mathcal P(\alpha))$ and $(\mathcal M(\alpha))$, $\alpha\in\Uad$, respectively.
Shape optimization problems with $(\mathcal P(\alpha))$, $(\mathcal M(\alpha))$ as the state relation read as follows:
\begin{equation}
\tag{$\mathbb P_{\mathcal P}$}
\label{eq:PoptP}
\left.
\begin{aligned}
&\mbox{\it Find }\ve z^*\in\G_{\mathcal P}\mbox{ \it such that}\\\\
&\J_{\mathcal P}(\ve z^*) \le \J_{\mathcal P}(\ve z)~\forall \ve z\in\G_{\mathcal P},
\end{aligned}
\right\}
\end{equation}
where $\ve z^*=(\alpha^*,\uu(\alpha^*),p(\alpha^*))$, $\ve z=(\alpha,\uu(\alpha),p(\alpha))$, and
\begin{equation}
\tag{$\mathbb P_\mathcal M$}
\label{eq:PoptM}
\left.
\begin{aligned}
&\mbox{\it Find }\ve z^*\in\G_{\mathcal M}\mbox{ \it such that}\\\\
&\J_{\mathcal M}(\ve z^*) \le \J_{\mathcal M}(\ve z)~\forall \ve z\in\G_{\mathcal M},
\end{aligned}
\right\}
\end{equation}
where $\ve z^*=(\alpha^*,\uu(\alpha^*),p(\alpha^*),\sigma^\nu(\alpha^*),\sigma^\tau(\alpha^*))$, $\ve z=(\alpha,\uu(\alpha),p(\alpha),\sigma^\nu(\alpha),\sigma^\tau(\alpha))$, respectively.

To prove the existence of solutions to \eqref{eq:PoptP} and \eqref{eq:PoptM} we use compactness and lower semicontinuity arguments.

Convergence in $\G_{\mathcal P}$ and $\G_{\mathcal M}$ will be introduced using the results of Theorem \ref{th:G_closed} and \ref{th:domain_dependence_4field}.
We say that $\ve z_n\to\ve z$, $\ve z_n$, $\ve z\in\G_{\mathcal P}$ if \eqref{eq:conv_alpha} and \eqref{eq:conv_up_alpha} hold.
Analogously, $\ve z_n\to\ve z$, $\ve z_n$, $\ve z\in\G_{\mathcal M}$ if \eqref{eq:conv_alpha}--\eqref{eq:conv_sigma_tau} hold.

From the definition of $\Uad$ and the Arzel\`a-Ascoli theorem we see that $\Uad$ is a compact subset of $\C^1$.
This, together with \eqref{eq:est_sol_alpha} and Theorem \ref{th:G_closed} and \ref{th:domain_dependence_4field} proves the following result.
\begin{theorem}
The sets $\G_{\mathcal P}$ and $\G_{\mathcal M}$ are compact with respect to convergences introduced above.
\end{theorem}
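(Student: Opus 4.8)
The plan is to read ``compact'' as sequential compactness with respect to the two convergences introduced just above the statement; this is exactly what is needed, together with lower semicontinuity of the cost functionals, for the Weierstrass-type existence arguments for \eqref{eq:PoptP} and \eqref{eq:PoptM}. The proof is a bookkeeping combination of the Arzel\`a--Ascoli theorem, weak sequential compactness of bounded sets in Hilbert spaces, and the graph-closedness results already established (Theorems \ref{th:G_closed} and \ref{th:domain_dependence_4field}).

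For $\G_{\mathcal P}$, I would take an arbitrary sequence $\{\ve z_n\}=\{(\alpha_n,\uu_n,p_n)\}\subset\G_{\mathcal P}$. Since $\Uad$ is, by its definition \eqref{eq:defUad} and Arzel\`a--Ascoli, a compact subset of $\C^1([0,1])$ (in particular closed under $\C^1$-convergence, the bound $|\alpha''|\le C_2$ a.e. being preserved in the limit), extract a subsequence with $\alpha_n\to\alpha$ in $\C^1([0,1])$, $\alpha\in\Uad$. The uniform a priori estimate \eqref{eq:est_sol_alpha} gives $\norm{\pi_{\alpha_n}\uu_n}_{1,\wOmega}+\norm{p_n^0}_{0,\wOmega}\le c$ with $c$ independent of $n$; moreover $\int_{\wOmega}p_n^0=\int_{\Omega(\alpha_n)}p_n=0$, so the $p_n^0$ belong to $L^2_0(\wOmega)$. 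By weak sequential compactness of bounded sets in $(H^1_0(\wOmega))^2$ and in the closed subspace $L^2_0(\wOmega)\subset L^2(\wOmega)$, pass to a further subsequence so that $(\pi_{\alpha_n}\uu_n,p_n^0)\weakly(\ubar,\pbar)$ in $(H^1_0(\wOmega))^2\times L^2_0(\wOmega)$. Then \eqref{eq:conv_alpha} and \eqref{eq:conv_up_alpha} hold along this subsequence, and Theorem \ref{th:G_closed} yields that $(\ubar_{|\Omega(\alpha)},\pbar_{|\Omega(\alpha)})$ solves $(\pstav\alpha)$; hence $\ve z:=(\alpha,\ubar_{|\Omega(\alpha)},\pbar_{|\Omega(\alpha)})\in\G_{\mathcal P}$ and $\ve z_n\to\ve z$ in the sense of convergence on $\G_{\mathcal P}$.

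For $\G_{\mathcal M}$, the same scheme works with one additional observation. Given $\{\ve z_n\}=\{(\alpha_n,\uu_n,p_n,\sigma^\nu_n,\sigma^\tau_n)\}\subset\G_{\mathcal M}$, the velocity-pressure part $(\uu_n,p_n)$ solves $(\pstav{\alpha_n})$ (by the interpretation of the four-field problem in Section \ref{sec:formulation_4field}), so \eqref{eq:est_sol_alpha} again applies and one extracts subsequences with $\alpha_n\to\alpha$ in $\C^1([0,1])$ and $(\pi_{\alpha_n}\uu_n,p_n^0)\weakly(\ubar,\pbar)$ as before. Theorem \ref{th:domain_dependence_4field} then applies verbatim and delivers (along the further subsequence extracted inside its proof for the weak $L^2$-limit of $\sigma^\tau_n$) the convergences \eqref{eq:conv_sigma_nu} and \eqref{eq:conv_sigma_tau} together with the fact that $(\ubar_{|\Omega(\alpha)},\pbar_{|\Omega(\alpha)},\sigma^\nu(\alpha),\sigma^\tau(\alpha))$ solves $(\mathcal M(\alpha))$. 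Thus the limit quintuplet lies in $\G_{\mathcal M}$ and $\ve z_n\to\ve z$ in the convergence of $\G_{\mathcal M}$, which proves the claim.

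I do not anticipate a genuine obstacle; the only points requiring care are (a) checking that the zero-extended pressures stay in $L^2_0(\wOmega)$ so that the weak limit $\pbar$ does too, (b) keeping track of the finitely many successive subsequence extractions, in particular the one for $\sigma^\tau_n$ hidden inside Theorem \ref{th:domain_dependence_4field}, and (c) the closedness of $\Uad$ under $\C^1$-convergence, which is immediate from the pointwise bounds in \eqref{eq:defUad}.
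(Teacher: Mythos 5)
Your proposal is correct and follows essentially the same route as the paper: compactness of $\Uad$ in $\C^1([0,1])$ via Arzel\`a--Ascoli, the uniform a priori estimate \eqref{eq:est_sol_alpha} giving weak subsequential limits of $(\pi_{\alpha_n}\uu_n,p_n^0)$, and the graph-closedness Theorems \ref{th:G_closed} and \ref{th:domain_dependence_4field} to identify the limit as an element of $\G_{\mathcal P}$, respectively $\G_{\mathcal M}$. The paper gives only this one-line sketch, so your careful bookkeeping (mean-zero of the extended pressures, the hidden subsequence for $\sigma^\tau_n$, closedness of $\Uad$) is a faithful elaboration rather than a different argument.
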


We say that $\J_{\mathcal P}$ and $\J_{\mathcal M}$ are lower semicontinuous functionals on $\G_{\mathcal P}$, and $\G_{\mathcal M}$, respectively if
\begin{equation}
\label{eq:JP_lsc}
\ve z_n\to\ve z, ~\ve z_n,~\ve z\in\G_{\mathcal P} \Rightarrow \liminf_{n\to\infty} \J_{\mathcal P}(\ve z_n)\ge\J_{\mathcal P}(\ve z),
\end{equation}
\begin{equation}
\label{eq:JM_lsc}
\ve z_n\to\ve z, ~\ve z_n,~\ve z\in\G_{\mathcal M} \Rightarrow \liminf_{n\to\infty} \J_{\mathcal M}(\ve z_n)\ge\J_{\mathcal M}(\ve z).
\end{equation}
\begin{theorem}
Let the functionals $\J_{\mathcal P}$, $\J_{\mathcal M}$ satisfy \eqref{eq:JP_lsc}, and \eqref{eq:JM_lsc}, respectively.
Then there exists a solution to \eqref{eq:PoptP} and \eqref{eq:PoptM}.
\end{theorem}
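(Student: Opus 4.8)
The plan is to use the direct method of the calculus of variations, combining the compactness of the graphs $\G_{\mathcal P}$ and $\G_{\mathcal M}$ (established in the preceding theorem) with the assumed lower semicontinuity of the cost functionals. I treat both problems simultaneously since the argument is identical in structure. First I would fix $i\in\{\mathcal P,\mathcal M\}$ and observe that $\G_i\neq\emptyset$: for every $\alpha\in\Uad$ the state problem $(\mathcal P(\alpha))$ (resp. $(\mathcal M(\alpha))$) admits at least one solution by the existence results of Section~\ref{sec:formulation_vp} (resp. Section~\ref{sec:formulation_4field}), so there is at least one admissible point in $\G_i$. Consequently the infimum
\[ m := \inf_{\ve z\in\G_i} \J_i(\ve z) \]
is well defined; a priori $m\ge-\infty$, and I would postpone the question of whether $m$ is finite until the lower semicontinuity step handles it.

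Next I would take a minimizing sequence $\{\ve z_n\}\subset\G_i$, i.e. $\J_i(\ve z_n)\to m$ as $n\to\infty$. By definition of convergence on $\G_i$, each $\ve z_n$ has the form $(\alpha_n,\uu_n,p_n)$ (resp. $(\alpha_n,\uu_n,p_n,\sigma^\nu_n,\sigma^\tau_n)$) with $\alpha_n\in\Uad$ and $(\uu_n,p_n)$ (resp. the quadruplet) solving the corresponding state problem on $\Omega(\alpha_n)$. Since $\Uad$ is compact in $\C^1([0,1])$ by the Arzel\`a--Ascoli theorem, a subsequence (not relabelled) satisfies $\alpha_n\to\alpha$ in $\C^1([0,1])$ with $\alpha\in\Uad$, which is exactly \eqref{eq:conv_alpha}. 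The uniform bound \eqref{eq:est_sol_alpha} gives that $\{(\pi_{\alpha_n}\uu_n,p_n^0)\}$ is bounded in $(H^1_0(\wOmega))^2\times L^2_0(\wOmega)$; extracting a further subsequence yields $(\pi_{\alpha_n}\uu_n,p_n^0)\weakly(\ubar,\pbar)$ in this space, which is \eqref{eq:conv_up_alpha}. In the case $i=\mathcal M$, Theorem~\ref{th:domain_dependence_4field} then furthermore produces (along a subsequence) the limits \eqref{eq:conv_sigma_nu}--\eqref{eq:conv_sigma_tau}. Setting $\ve z^* := (\alpha,\ubar_{|\Omega(\alpha)},\pbar_{|\Omega(\alpha)})$ (resp. with $\sigma^\nu(\alpha),\sigma^\tau(\alpha)$ appended), Theorem~\ref{th:G_closed} (resp. Theorem~\ref{th:domain_dependence_4field}) guarantees that the limit point is itself a solution of the state problem on $\Omega(\alpha)$, hence $\ve z^*\in\G_i$, and by the very construction of the convergence we have $\ve z_n\to\ve z^*$ in $\G_i$.

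Finally I would invoke the lower semicontinuity hypothesis \eqref{eq:JP_lsc} (resp. \eqref{eq:JM_lsc}): since $\ve z_n\to\ve z^*$ in $\G_i$ and $\ve z^*\in\G_i$,
\[ \J_i(\ve z^*) \le \liminf_{n\to\infty} \J_i(\ve z_n) = \lim_{n\to\infty}\J_i(\ve z_n) = m. \]
In particular $m>-\infty$, and since $\ve z^*\in\G_i$ we also have $m\le\J_i(\ve z^*)$, whence $\J_i(\ve z^*)=m$. Thus $\ve z^*$ is a minimizer, i.e. a solution of \eqref{eq:PoptP} (resp. \eqref{eq:PoptM}). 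The only genuinely substantive ingredient is the closedness of the graphs under the chosen convergence, which is precisely the content of Theorems~\ref{th:G_closed} and \ref{th:domain_dependence_4field} and is assumed here; everything else is the routine compactness-plus-lower-semicontinuity scheme. A minor point of care is that in the $i=\mathcal M$ case the convergences \eqref{eq:conv_sigma_nu}--\eqref{eq:conv_sigma_tau} only hold along the subsequence extracted in Theorem~\ref{th:domain_dependence_4field}, so one must perform all subsequence extractions before identifying the limit as an element of $\G_{\mathcal M}$ and before applying lower semicontinuity — but since a subsequence of a minimizing sequence is still minimizing, this causes no difficulty.
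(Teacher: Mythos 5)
Your proposal is correct and is exactly the argument the paper has in mind: the paper states only that the proof is straightforward, relying on the compactness of $\G_{\mathcal P}$, $\G_{\mathcal M}$ and the lower semicontinuity assumptions \eqref{eq:JP_lsc}, \eqref{eq:JM_lsc}, which is precisely the direct-method scheme you spell out (minimizing sequence, subsequence extraction via Arzel\`a--Ascoli and \eqref{eq:est_sol_alpha}, graph closedness from Theorems \ref{th:G_closed} and \ref{th:domain_dependence_4field}, then lower semicontinuity). Your care about performing all subsequence extractions before identifying the limit in the $\mathcal M$ case is a reasonable refinement of the same route, not a different approach.
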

Proof is straightforward.

\section*{Conclusions}

In the first part of this paper we analyzed the mathematical model of  the Stokes system with a threshold slip boundary condition whose slip bound depends on the solution itself.
We used two weak formulations of this problem: the standard velocity-pressure formulation and the extended formulation in terms of the velocity, pressure, shear and normal stress.
We proved the existence of a solution for a large class of functions representing the slip bound and studied under which conditions the  solution is unique.
In the second part of the paper we analyzed how  solutions to both weak formulations depend on the geometry of the problem, in particular on the shape  of the slip part $S$ of the boundary.
Using an appropriate parametrization of $S$  we proved that the graphs of the respective  solution mappings are compact in an appropriate topology.
This result plays the key role in shape optimization.

\paragraph{Acknowledgement}
The work of J.H. was supported by the Czech Science Foundation grant No. P201/12/0671.
J.S.~acknowledges the institutional support RVO:46747885.

%
\bibliographystyle{zamm-title}
\bibliography{ref}
 
%
%

\end{document}